\documentclass[11pt,reqno]{amsart}
\usepackage{amsmath,amsfonts,amscd,amsxtra,amssymb,latexsym,calc,xfrac}

\usepackage[a4paper,bookmarks,
bookmarksnumbered,%
 colorlinks=true,%
 linkcolor=blue,%
 citecolor=blue,%
 filecolor=blue,%
 urlcolor=blue,%
]
{hyperref}

 \usepackage[T1]{fontenc}
 \usepackage[latin1]{inputenc}
 \usepackage{times}
\usepackage{euscript} 
 
 \unitlength 1cm

\pagestyle{headings}


\DeclareMathOperator{\Sing}{Sing}


\def\ra{\rightarrow}
\def\cal{\mathcal} 
\def\wt{\widetilde}
\def\ol{\overline}

\def\CC{\mathbb{C}}
\def\PP{\mathbb{P}}
\def\QQ{\mathbb{Q}}
\def\ZZ{\mathbb{Z}}
  
\def\RR{\mathbb{R}}

\def\OO{\cal O} 
\def\XX{\cal X}

\def\yy{\cal Y}

\def\s-{\setminus}

\def\bcp{\mathbb C\mathbb P}


\newcommand{\bigslant}[2]{{\raisebox{.2em}{$#1$}\left/\raisebox{-.2em}{$#2$}\right.}}

\newtheorem{main}{Theorem}

\newtheorem{thm}{Theorem}[section]
\newtheorem{prop}[thm]{Proposition}

\newtheorem{defn}[thm]{Definition}

\newtheorem{cor}[thm]{Corollary}
\newtheorem{lem}[thm]{Lemma}

\newtheorem{rmk}[thm]{Remark}

\newtheorem{examples}[thm]{Examples}

\numberwithin{equation}{section}


\begin{document}

\title{ALE Ricci-flat K\"ahler surfaces and weighted projective spaces}

\author{R. R\u asdeaconu, I. {\c S}uvaina}

\address{
        Department of Mathematics,1326 Stevenson Center, Vanderbilt University, Nashville, TN, 37240}
        
\email{rares.rasdeaconu@vanderbilt.edu}

\email{ioana.suvaina@vanderbilt.edu}

\keywords{ALE Ricci-flat K\"ahler surfaces, Gibbons-Hawking metrics, Hitchin metrics, Tian-Yau metrics, $\QQ-$Gorenstein deformations, log del Pezzo surfaces}

\subjclass[2000]{Primary: 53C26, 32S25; Secondary: 14B07, 57R18}

\date{\today}

\begin{abstract}
We show that the explicit ALE Ricci-flat K\"ahler metrics constructed 
by Eguchi-Hanson, Gibbons-Hawking, Hitchin and Kronheimer, 
and their free quotients are metrics obtained by Tian-Yau techniques. 
The proof relies on a construction of  good compactifications  
of  $\QQ-$Gorenstein deformations of  quotient surface singularities 
as log del Pezzo surfaces with only cyclic quotient singularities at infinity.
\end{abstract}

\maketitle

\section{Introduction}
\label{intro}

In his 1978 ICM plenary lecture \cite{yau}, Yau raised several 
questions regarding the existence of compactifications of 
complete Ricci-flat K\"ahler manifolds. One of the questions asks 
about the existence of compactifications in the complex analytic sense, 
and another if the anticanonical line bundle of this compactification 
has ``good properties''. The identification of the required properties 
became clear later, when in their joint work \cite{ty1,ty2}, Tian and Yau 
provided sufficient conditions such that complete Ricci-flat K\"ahler 
metrics exist on the complement of a divisor. In this paper, we address 
these questions for surfaces equipped with asymptotically locally Euclidean 
(ALE) metrics.

The classification of ALE Ricci-flat K\"ahler surfaces was accomplished 
by Kronheimer \cite{kron1,kron2} in the simply-connected case, 
and completed  by the second author \cite{ALE}  in the non-simply connected 
case (see also \cite{wri}). More precisely, we have the following:

\begin{thm}[\cite{kron1, kron2, ALE}]
\label{classif}
Let  $(M, J, g,\omega_g)$  be a smooth  ALE Ricci-flat K\"ahler surface 
asymptotic to $\CC^2/G,$ where $G$ is a finite subgroup of $U(2)$ 
acting freely on ${\CC^2}\setminus\{0\}$. Then, the complex manifold 
$(M, J)$ can be obtained as the minimal resolution of a fiber of a 
one-parameter $\QQ-$Gorenstein deformation of the quotient singularity 
$\CC^2/G.$ Given the K\"ahler class $\Omega=[\omega_g]\in H^2(M,\RR),$ 
then $g$ is the unique ALE Ricci-flat K\"ahler metric  in the class. 

Moreover, any complex surface $(M, J)$ obtained by the above construction 
admits a unique ALE Ricci-flat K\"ahler metric in any K\"ahler class $\Omega.$ 
\end{thm}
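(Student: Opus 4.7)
The plan is to separate the simply-connected case $G \subset SU(2)$ from the general one, and within each to decouple the classification of the underlying complex manifold from the existence and uniqueness of Ricci-flat K\"ahler metrics in each prescribed K\"ahler class.

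For the simply-connected case $G \subset SU(2)$, any ALE Ricci-flat K\"ahler surface is automatically hyperK\"ahler because its holonomy lies in $SU(2)=Sp(1)$. The strategy here combines two Kronheimer results: the hyperK\"ahler quotient construction, which produces for every admissible period triple an ALE hyperK\"ahler 4-manifold asymptotic to $\CC^2/G$, and the ALE Torelli theorem, which shows that every such manifold arises this way and is determined up to isometry by its period data. The underlying complex manifolds are the minimal resolutions of smoothings of the rational double point $\CC^2/G$; since such singularities are Gorenstein, every deformation is automatically $\QQ$-Gorenstein. Fixing a K\"ahler class and letting the transverse periods vary parametrizes all ALE Ricci-flat K\"ahler metrics in that class via hyperK\"ahler rotation, yielding uniqueness in a fixed class and existence in every class.

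For the non-simply-connected case, I would pass to the universal cover $\pi\colon \tilde M\to M$, a smooth ALE Ricci-flat K\"ahler surface asymptotic to $\CC^2/\tilde G$ with $\tilde G:=G\cap SU(2)$. The simply-connected case realizes $\tilde M$ as the minimal resolution of a smoothing of $\CC^2/\tilde G$. The decisive step, carried out in \cite{ALE}, is to show that the deck group $G/\tilde G$ extends to a biholomorphic action on the total space of this smoothing, and that the resulting quotient is a $\QQ$-Gorenstein one-parameter smoothing of $\CC^2/G$ whose smooth fiber is $M$. Existence and uniqueness of an ALE Ricci-flat K\"ahler metric on $M$ in each K\"ahler class then follow by selecting an equivariant Kronheimer metric on $\tilde M$ and descending to the quotient.

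The main obstacle is this descent step: one must verify that the $G/\tilde G$-action extends holomorphically over the entire smoothing and scales the holomorphic symplectic form by roots of unity, so that the quotient family inherits a $\QQ$-Cartier relative canonical divisor rather than merely being a flat family of quotient singularities. The subtlety is that $\CC^2/G$ is in general not Gorenstein, so one cannot simply take $G/\tilde G$-invariants in a Gorenstein deformation; one must instead argue at the level of the index-one cover to guarantee the $\QQ$-Gorenstein property, and then check that the ALE metric descends with the prescribed asymptotic model.
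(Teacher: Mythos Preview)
The paper does not give its own proof of this theorem: it is stated in the introduction as a known result, attributed to \cite{kron1,kron2,ALE}, and is used as input for the rest of the paper (in particular, its uniqueness clause is invoked in the proof of Theorem~A). So there is no in-paper proof to compare your proposal against.

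That said, your outline matches the architecture of the cited sources quite closely. The split into the simply-connected case (Kronheimer's hyper-K\"ahler quotient construction together with his Torelli-type theorem) and the non-simply-connected case (pass to the universal cover, apply the simply-connected classification, and descend) is precisely the structure of \cite{kron1,kron2} and \cite{ALE}. Your identification of the descent step as the main issue in the second case is accurate: the content of \cite{ALE} is exactly to show which free $\mu_n$-actions on $A$-type ALE spaces are compatible with the hyper-K\"ahler data, and to match the resulting quotients with the Koll\'ar--Shepherd-Barron list of $\QQ$-Gorenstein smoothable cyclic quotient singularities $\frac{1}{dn^2}(1,dnm-1)$. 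One refinement worth making in your sketch: in the non-simply-connected case the universal cover is always of $A$-type (not arbitrary ADE), since the only finite subgroups of $U(2)$ acting freely on $\CC^2\setminus\{0\}$ with non-abelian image in $SU(2)$ already lie in $SU(2)$; this is why \cite{ALE} only needs to analyze quotients of $A_{dn-1}$-manifolds. With that adjustment your proposal is a faithful summary of the literature proofs, but it is not something the present paper itself supplies.
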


The underlying complex surfaces in the above classification are in direct 
correspondence \cite{ALE} with the list of quotient singularities which 
admit $\QQ-$Gorenstein smoothings, which is due to Koll\'ar and 
Shepherd-Barron \cite{ksb}.  Following their terminology, we call such 
singularities of class $T$. The possible singularities are either rational double 
points, i.e. singularities of type $A_k, D_k, E_6, E_7$ and $E_8$, or finite 
cyclic singularities of the type $\dfrac1{dn^2}(1,dnm-1).$ The rational double 
points correspond to the case when the surfaces $M$ are simply connected, 
and then the metrics are hyperk\"ahler. They are  associated to Gorenstein 
smoothings and have trivial canonical line bundle. In the second case,  
the surfaces $M$ have finite cyclic fundamental group and the metrics 
are non-hyperk\"ahler. They are $\QQ-$Gorenstein smoothings, and have 
torsion canonical line bundle.

The ALE Ricci-flat K\"ahler metrics on open complex surfaces were explicitly  
constructed by Eguchi-Hanson, Gibbons-Hawking, Hitchin and Kronheimer 
\cite{egha,giha,hit,kron1} in the simply connected case. In \cite{ALE}, the 
second author completes the list in the non-simply connected case by adding 
the free quotients of certain $A-$type manifolds. The above classification is 
based on the theory of twistor spaces \cite{kron1,kron2, ALE}.

Another  method of constructing complete Ricci-flat K\"ahler metrics on 
non-compact manifolds is by solving  the complex Monge-Amp\`ere equations. 
The techniques are due to Tian-Yau \cite{ty1, ty2}, Bando-Kobayashi \cite{bako}, 
and Joyce \cite{joyce}. In particular, Tian, Yau and Bando, Kobayashi prove the 
existence of Ricci-flat K\"ahler metrics on the complement of a divisor in a 
complex orbifold under certain conditions. In general, these metrics are not ALE, 
and if one insists that the ambient space is a smooth surface, the examples are scarce 
as it can be seen from Lemma \ref{smoothcase} (see also \cite{ty1}). 
As we are only interested in manifolds with ALE Ricci-flat K\"ahler metrics, 
the relevant results are in the context of the complement of a divisor in an 
orbifold surface, and are due to Tian and Yau  \cite{ty2}. 
A more detailed description of their results is included in section \ref{T-Ysec}.
The metrics constructed by Tian and Yau are obtained by analytical methods, 
and the existence of the metric is given implicitly. We analyze the ALE K\"ahler 
Ricci flat surfaces from their perspective, and we answer Yau's questions \cite{yau}:

\begin{main}
\label{metrics}
Let $(M, J, g)$ be an ALE Ricci-flat K\"ahler surface. Then there exists a 
complex compactification $(\ol M, \bar J, D),$ where $\ol M$ is an orbifold 
surface and $D=\ol M\setminus M$ is the divisor at infinity, such that 
$D$ is an admissible, almost ample, admits a K\"ahler-Einstein metric 
and $-K_{\ol M}= \beta [D], \beta>1.$ In particular, any  ALE Ricci-flat 
K\"ahler metric $g$ can be obtained as a Tian-Yau metric.
\end{main}

As an immediate consequence of Theorem \ref{metrics}, 
we obtain the following result:

\begin{cor}
\label{equiv-met}
The Tian-Yau method rediscovers the metrics constructed by Eguchi-Hanson, 
Gibbons-Hawking, Hitchin, Kronheimer, and their finite free quotients.
\end{cor}

These results are an immediate continuation of the second author's work  
\cite{ALE} and fill in a gap in the understanding of the ALE Ricci-flat K\"ahler 
surfaces.

\bigskip

For ALE hyperk\"ahler surfaces, Kronheimer shows that the canonical model of 
the surface  is an affine hypersurface \cite{kron1}. This is essential in showing 
that complex analytic compactifications exist, and such examples appear in 
\cite{saito} (for some choice of a complex structure on $A,D,E$ cases) and  
\cite{craig} (for $A-$type surfaces). In this paper, we prove that there is a 
compactification for any ALE Ricci-flat K\"ahler surface. We consider compactifications 
of a  fiber of a $\QQ-$Gorenstein deformation as a hypersurface in a weighted 
projective space.  The properties of the  compactifications  are summarized in the 
following theorem, and for a more detailed description see Propositions 
\ref{compactific}, \ref{compact2}, and \ref{comp-def-non-cyclic}:

\begin{main}
\label{compactification}
Let $M$ be a fiber of a $\QQ-$Gorenstein deformation of a singularity of class $T.$ 
Then $M$ embeds into a log del Pezzo surface, $\ol M,$ as the complement of a 
smooth, rational curve, which is a rational multiple of the anticanonical divisor. 
The singularities along the divisor at infinity are all isolated finite cyclic quotients. 
Moreover, if $M$ is associated to a  finite cyclic quotient singularity, then there are 
infinitely many minimal  compactifications with the above properties. 
\end{main}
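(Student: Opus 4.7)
The plan is to construct the compactification $\ol M$ explicitly by realizing a fiber of the $\QQ$-Gorenstein deformation as (a finite cyclic quotient of) a hypersurface in a weighted projective three-space, following the Kollár--Shepherd-Barron classification \cite{ksb}. Class $T$ singularities split into two families---the Du Val singularities of types $A_k$, $D_k$, $E_6$, $E_7$, $E_8$, and the cyclic quotient singularities $\frac{1}{dn^2}(1,dnm-1)$---and both types admit presentations as weighted homogeneous hypersurfaces (possibly modulo a finite cyclic group), whose smoothings homogenize naturally. I would carry out the construction case by case.

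For the Du Val case, the singularity is cut out by a weighted homogeneous polynomial $f(x,y,z)$ of some degree $d$ in $\CC^3$, with explicit weights $(a,b,c)$ determined by the $ADE$ type, and a smooth fiber of the smoothing is $M=\{f(x,y,z)=t\}$, $t\neq 0$. Introducing a fourth variable $u$ of weight $e$ so that $\ol M=\{F(x,y,z)-tu^d=0\}\subset\PP(a,b,c,e)$ is quasi-smooth along $\{u=0\}$, I would take the divisor at infinity to be $D_\infty=\{u=0\}\cap\ol M$. This is the projective link of the singularity, and a direct check should show it is a smooth rational curve meeting the vertices of the weighted projective three-space at finitely many cyclic quotient singularities of $\ol M$. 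The adjunction formula for quasi-smooth weighted hypersurfaces yields $K_{\ol M}=\OO(a+b+c+e-d)|_{\ol M}$; an $ADE$-table inspection confirms $a+b+c+e-d<0$ in all cases, so $-K_{\ol M}$ is a positive rational multiple of $[D_\infty]$, making $\ol M$ log del Pezzo.

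For the cyclic case, by Kollár--Shepherd-Barron the singularity $\frac{1}{dn^2}(1,dnm-1)$ and its $\QQ$-Gorenstein smoothing are the $\mu_n$-quotient of a suitable smoothing of the $A_{dn-1}$-singularity $\{xy=z^{dn}\}$. I would first compactify the $A_{dn-1}$-smoothing as in the Du Val analysis and then descend by the $\mu_n$-action, which extends to the ambient weighted projective three-space; the quotient inherits the log del Pezzo property, the smooth rational divisor at infinity, and the cyclic quotient nature of its singularities along that divisor. To produce infinitely many non-isomorphic minimal compactifications, I would exploit the freedom in choosing the smoothing inside its $\mu_n$-equivariant deformation space: different partitions of the smoothing parameter yield different ambient weights, hence different compactifications, which one can distinguish by a birational invariant such as the self-intersection of the proper transform of $D_\infty$ in the minimal resolution.

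The principal technical obstacle is twofold. First, I must verify that $\ol M$ is quasi-smooth along $D_\infty$ except at finitely many isolated cyclic quotient points, which requires a careful local analysis at each coordinate vertex of $\PP(a,b,c,e)$, controlled by the Newton polygon of $f$. Second, I must compute $[D_\infty]$ in $\pic(\ol M)\otimes\QQ$ and identify it with a positive rational multiple of $-K_{\ol M}$, which rests on a controlled use of weighted adjunction. Once these are in place, ampleness of $-K_{\ol M}$ is immediate, and the $\mu_n$-quotient step in the cyclic case is straightforward because all relevant local and numerical data descend. With Theorem \ref{compactification} established, Theorem \ref{metrics} follows by feeding $(\ol M,D_\infty)$ into the Tian--Yau existence theorem and then invoking the uniqueness statement in Theorem \ref{classif}.
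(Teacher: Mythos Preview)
Your overall strategy matches the paper's: realize $\ol M$ as a hypersurface in a well-formed weighted projective three-space, read off the cyclic singularities at infinity from the coordinate charts, and compute $K_{\ol M}$ by weighted adjunction. For the non-cyclic Du Val types $D_k,E_6,E_7,E_8$ this is exactly what the paper does (with the compactifying variable of weight $e=1$; note your adjunction sign is reversed---one needs $a+b+c+e-d>0$ so that $K_{\ol M}=\OO_{\ol M}(d-a-b-c-e)$ is negative).

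There are two points where your plan diverges from the paper, one cosmetic and one a genuine gap. First, in the cyclic case $\frac{1}{dn^2}(1,dnm-1)$ the paper does \emph{not} first compactify the $A_{dn-1}$-cover and then quotient by $\mu_n$. Instead it absorbs the $\mu_n$-quotient into the ambient space by choosing the weight of the homogenizing variable $w$ to be $n$: one writes directly
\[
\ol M=\Bigl(xy=\prod_{j=1}^{l}(z^n-a_jw^c)^{k_j}\Bigr)\subset\PP(a,b,c,n),
\]
and the affine chart $U_w\simeq\CC^3/\frac1n(a,b,c)$ \emph{is} the required cyclic quotient once the arithmetic conditions $a+b=dnc$, $am\equiv c\pmod n$, $\gcd(c,n)=\gcd(a,c)=1$ are imposed. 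Your quotient-after-compactification route can be made to work, but it is less transparent and you would still need to identify the resulting orbifold with a weighted projective hypersurface to run adjunction cleanly.

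Second, and more importantly, your mechanism for producing infinitely many minimal compactifications is not correct. The fiber $M$ is \emph{fixed}; varying ``partitions of the smoothing parameter'' changes which fiber you are looking at, not how a given fiber is compactified. The paper's infinitude comes instead from the arithmetic freedom in the weights: for each $c\geq 1$ with $\gcd(c,n)=1$ there are $d$ admissible pairs $(a,b)$ satisfying $a+b=dnc$ and $am\equiv c\pmod n$, and each choice gives a different log del Pezzo compactification of the \emph{same} $M$, distinguished already by the types $\frac1a(c,n)$ and $\frac1b(c,n)$ of the two singular points on the curve at infinity. You should replace your proposed source of infinitude with this weight-parameter argument.
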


We say that a compactification is minimal if there is no rational component 
of the divisor at infinity of self-intersection $(-1)$ and passing only through 
smooth points of $\ol M.$  We recall  \cite{km} that a normal complex surface 
$\ol M$ with at worst log terminal singularities, i.e. quotient singularities, 
is called a log del Pezzo surface if its anti-canonical divisor $-K_{\ol M}$ is ample. 
We should point out that our constructions verify stronger conditions: if we denote 
by $D$ the curve at infinity, then the $\QQ-$Cartier divisors $D$ and 
$-(K_{\ol M}+D)$ are both ample.

Proposition \ref{compactific} describes an infinity of compactifications in the 
case of finite cyclic groups of $\dfrac1{dn^2}(1,dnm-1)-$type, in particular 
$A-$type when $n=1.$ Among these, for $n=1$ or $n=2$ there exists a unique 
one which can be used in conjunction with the Tian-Yau metric construction. 
For all other cases, the key condition that the divisor at infinity admits a 
K\"ahler-Einstein metric is not satisfied. We provide a second construction 
for compactifications of $\dfrac1{dn^2}(1,dnm-1)-$type surfaces in Section 
\ref{secondconstr}, as hypersurfaces in quotients of weighted projective spaces. 
Again we obtain infinitely many compactifications in the complex analytic sense, 
among which there exists only one which can be used to prove Theorem \ref{metrics}.

Given a singularity of type $T,$ and a $\QQ-$Gorenstein smoothing, 
the underlying smooth manifold of a generic fiber is the Milnor fiber of the singularity. 
An arbitrary fiber of a deformation might admit singularities, which are all 
rational double points. Hence, if we consider the associated minimal resolution 
we obtain a manifold diffeomorphic to the Milnor fiber. The general construction 
of considering a deformation followed by the minimal resolution of the 
rational double points, exhibits a family of complex structures on the Milnor fiber. 
In the special case, when the deformation is singular, the compactification 
$\ol M$ is not log del Pezzo, but nevertheless satisfies the required conditions 
of the Tian-Yau construction. Throughout this paper we emphasize which 
particular complex structure we consider.

In general, the compactifications of ALE K\"ahler Ricci-flat surfaces are rarely smooth. 
We discuss the cases when they are smooth in the last section of the article.

\subsection*{Notations and conventions}

\begin{enumerate}
\item By an $m-$ dimensional quotient singularity we mean a germ of an analytic 
space $(X,0),$ with $X = \CC^m/G,$ where the group $G\subseteq GL(m,\CC),$ 
and $0\in X$ is the representative of the $G-$orbit of $\{0\}\in \CC^m.$ If $G$ is 
the multiplicative group of the $n^{th}-$roots of unity, after a linear base change, 
we can assume  that the action is diagonal.

\item Let $n$ be a positive integer,  $\mu_n$ the multiplicative group of the 
$n^{th}-$roots of unity, and $\epsilon\in \mu_n$ a generator. We denote by the symbol 
$\displaystyle \frac1n(a_1,\dots,a_m)$ the action of the group $\mu_n$ on $\CC^m$
defined by 
$$
\epsilon(z_1,\dots,z_m)=(\epsilon^{a_1}z_1,\dots,\epsilon^{a_m}z_m),
$$ 
where $(a_1,\dots,a_m)\in \ZZ^m.$ 
We refer to the corresponding quotient space as a singularity of the type 
$\displaystyle \frac1n(a_1,\dots,a_m).$
Whenever necessary, we include the coordinates 
$$
\displaystyle \CC^m_{\bf z}/\frac1n(a_1,\dots,a_m),
$$ 
where  ${\bf z}=(z_1,\dots,z_m)\in \CC^m,$ to denote the cyclic quotient  singularity.

If $\rho$ is a different choice of the generator of the group $\mu_n,$ there exists an 
integer $k,~\gcd(k,n)=1,$ such that $\epsilon=\rho^k,$ and we obtain an equivalent 
notation of the singularity of the form $\displaystyle \frac1n(b_1,\dots,b_m),$ where 
$b_i=ka_i\mod n$ for $i=1,\dots, m.$

Let $f\in \CC[z_1,\dots,z_m].$ If $\displaystyle (f=0)\subseteq \CC^m$ is invariant under 
the above action of $\mu_n,$ we will denote by $\displaystyle (f=0)/\mu_n$ the induced 
quotient. If necessary, we explicitly include in the notation the action of $\mu_n,$ as above.

\item Let $(w_0,w_1,\dots,w_m)$ be an $(m+1)-$tuple of positive integers. 
The weighted projective space $\PP(w_0,\dots,w_m)$ is defined as the 
quotient of $\CC^{m+1}\setminus\{0\}$ by the $\CC^{*}-$action given by
$$
\lambda(z_0,z_1,\dots,z_m)=(\lambda^{w_0}z_0,\lambda^{w_1}z_1,\dots,\lambda^{w_m}z_m).
$$
Following \cite{fle}, we say that the weighted projective space 
$\PP(w_0,\dots,w_m)$ is {\em well-formed} if 
$\gcd(w_0,\dots,{\widehat w_i},\dots,w_m)=1,$ 
for each $i=0,\dots, m.$

The  weighted projective space is covered by the standard charts 
$
U_{z_i}=(z_i\neq 0)\simeq \CC_{\bf Z_{i}}^m/\frac1{w_i}
(w_0,\dots,{\widehat w_i},\dots, w_m)
$ 
centered at  
$P_i=[0:\ldots:1:\ldots:0],~i=0,\dots,m.$ The affine coordinates 
$
{\bf Z_i}=(Z_{0i},\dots,{\widehat  Z_{ii}},\dots, Z_{mi})
$ 
satisfy 
$$
Z_{ji}^{w_i}=\frac{z_j^{w_i}}{z_i^{w_j}}, ~j=0,\dots,{\widehat i},\dots, m,
$$
and are well-defined up to the corresponding action of $\mu_{w_i}.$ 
Whenever the coordinates are relevant in the descriptions of the spaces 
involved, we indicate them as 
$$
\PP_{[z_0:\dots:z_n]}(w_0,\dots,w_n).
$$
However, 
to simplify the notations we omit them when it is clear from the context.

\item All of the varieties discussed in this paper have only mild singularities. 
In particular, they are all $\QQ-$ factorial \cite{km}. We will not distinguish 
between their $\QQ-$Cartier divisors and Weil divisors with rational coefficients.

\end{enumerate}

\section{A compactification of an arbitrary fiber of a deformation}
\label{compact}

We begin by recalling the terminology and some general results contained 
in \cite{ksb,manetti}.  

\begin{defn}
A normal variety $X$ is $\QQ-$Gorenstein if it is Cohen-Macaulay and a 
multiple of the canonical divisor is Cartier.
\end{defn}
\begin{defn}
A flat map $\pi:\XX\ra \Delta\subseteq \CC$ is called a one-parameter 
$\QQ-$Gorenstein smoothing of a normal singularity $(X,x)$ if $\pi^{-1}(0)=X$ 
and there exists $U\subseteq \Delta$ an open neighborhood of $0$ such that 
the following conditions are satisfied. 
\begin{itemize}

\item [ i)] $\XX$ is $\QQ-$Gorenstein,

\item [ ii)] The induced map $\XX\ra U$ is surjective,

\item [ iii)] $X_t=\pi^{-1}(t)$ is smooth for every $t\in U\setminus\{0\}.$

\end{itemize}
\end{defn}

The following result of Koll{\'a}r and Shepherd-Barron \cite {ksb} gives a 
complete description of the singularities admitting a one-parameter 
$\QQ-$Gorenstein smoothing:

\begin{prop}[Koll{\'a}r,~Shepherd-Barron \cite{ksb}]
\label{descsing}
The quotient singularities admitting a one-parameter $\QQ-$Gorenstein 
smoothing are the following:
\begin{itemize}

\item [ 1)] Rational double points;

\item [ 2)] Cyclic singularities of the type 
$\displaystyle \frac{1}{dn^2}(1,dnm-1),$ 
for $d>0,~n\geq 1,$ and $(m,n)=1.$ 

\end{itemize}
\end{prop}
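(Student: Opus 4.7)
The plan is to prove both directions of this characterization by passing to the index-one (canonical) cover. For an isolated surface singularity $(X,x)$ that is $\QQ$-Gorenstein of index $r$, the canonical cover $\pi: Y \to X$ is the cyclic $\mu_r$-cover defined locally by trivializing $\OO_X(rK_X)$; it is Gorenstein, \'etale over the smooth locus of $X$, and carries a natural $\mu_r$-action with $X = Y/\mu_r$. The key point, established by Koll\'ar--Shepherd-Barron, is that $\QQ$-Gorenstein one-parameter deformations of $X$ correspond bijectively to $\mu_r$-equivariant Gorenstein deformations of $Y$, so a $\QQ$-Gorenstein smoothing of $X$ produces an equivariant Gorenstein smoothing of $Y$.

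For the necessity direction, invoke the classical fact that a normal Gorenstein surface singularity is smoothable if and only if it is a rational double point (Schlessinger-type rigidity together with Artin's analysis of $T^1$). Thus $Y$ must be an ADE singularity. Enumerating finite subgroups $G \subset U(2)$ acting freely on $\CC^2 \setminus \{0\}$ according to $G \cap SU(2)$: if $G \subset SU(2)$, then $X = Y$ is itself an RDP, yielding case~(1); otherwise the cyclic group $G/(G\cap SU(2)) \cong \det(G)$ acts on the RDP $Y$. A case analysis shows that for $Y$ of type $D$ or $E$, no nontrivial such cyclic extension admits an equivariantly compatible smoothing (the induced action on the versal deformation space of $Y$ fixes no smoothing direction), whereas for $Y$ of type $A_{dn-1}$ the allowed extensions yield exactly the singularities $\frac{1}{dn^2}(1, dnm-1)$ with $\gcd(m,n) = 1$.

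For the existence direction, realize $\frac{1}{dn^2}(1, dnm-1)$ as the $\mu_n$-quotient of the hypersurface $\{xy - z^{dn} = 0\} \subset \CC^3$ under the action with weights $\frac{1}{n}(1,-1,m)$. Both $xy$ and $z^{dn}$ have weight $\equiv 0 \bmod n$, so the one-parameter family $\{xy - z^{dn} = t\}$ is $\mu_n$-invariant. Its generic fiber is smooth, and the action is free away from the origin (using $\gcd(m,n) = 1$), so the quotient family is a $\QQ$-Gorenstein smoothing of the prescribed singularity. The rational double points are themselves hypersurface singularities in $\CC^3$ and admit classical Gorenstein smoothings, settling case~(1).

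The main obstacle is the uniqueness half of the case analysis: ruling out nontrivial cyclic extensions of $D$- and $E$-type RDPs that support equivariantly compatible smoothings. This requires a careful study of the action of $\mu_r$ on the versal deformation space of an ADE singularity, together with the local-to-global comparison of $\QQ$-Gorenstein deformations with equivariant deformations of the cover, and is where the technical heart of the Koll\'ar--Shepherd-Barron argument lies.
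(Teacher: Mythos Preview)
The paper does not prove this proposition: it is stated with attribution to Koll\'ar and Shepherd-Barron \cite{ksb} and no argument is given. So there is no ``paper's own proof'' to compare against; the authors simply import the result and proceed to use it.

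That said, your sketch has a genuine error in the necessity direction. You write: ``invoke the classical fact that a normal Gorenstein surface singularity is smoothable if and only if it is a rational double point.'' This is false---many Gorenstein surface singularities that are not rational double points are smoothable (simple elliptic singularities, cusp singularities, cones over curves of higher genus, etc.), and Schlessinger/Artin rigidity says nothing of the sort. The correct reason $Y$ is an RDP is much more direct and has nothing to do with smoothability: since $X=\CC^2/G$ is a quotient singularity, its index-one cover is $Y=\CC^2/(G\cap SU(2))$, and a quotient of $\CC^2$ by a finite subgroup of $SU(2)$ is \emph{by definition} a rational double point. Once you fix this, the rest of your outline (correspondence between $\QQ$-Gorenstein deformations of $X$ and $\mu_r$-equivariant deformations of $Y$, followed by the case analysis of which cyclic extensions of ADE groups yield compatible smoothings) is indeed the shape of the original Koll\'ar--Shepherd-Barron argument, and you correctly flag that the $D/E$ exclusion is the technical core you have not carried out.
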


For convenience, we recall that the rational double points are 
isolated quotients of $\CC^2$ by finite subgroups of $SU(2).$ 
They are classified by their types $A,D$ or $E.$ The singularities 
of type $A_{k-1}$ are cyclic quotient singularities of the type 
$\displaystyle \frac1k(1,-1),$ while the other rational double 
points are quotients of $\CC^2$ under the action of the 
non-cyclic binary polyhedral groups. They also admit a description 
as  hypersurface singularities 
$$
(f(x,y,z)=0)\subseteq \CC^3,
$$
where 
\begin{equation}
\label{poly}
f(x,y,z)=
\begin{cases}
xy+z^k, \text{for singularities of type}~A_{k-1},~k\geq 2,\\
x^2y+y^{k-1}+z^{2}, \text{for singularities of the type}~ D_k,~k\geq 4,\\
x^4+y^3+z^2, \text{for singularities of the type}~E_6,\\
x^3y+y^3+z^2, \text{for singularities of the type}~E_7,\\
x^5+y^3+z^2, \text{for singularities of the type}~E_8.
\end{cases}
\end{equation}

\begin{defn}
A normal surface singularity is called of class $T$ if it is a 
rational double point or a cyclic quotient singularity 
the type $\displaystyle \frac{1}{dn^2}(1,dnm-1),$ for $d>0,~n\geq 1,$ 
and $(m,n)=1.$ 
\end{defn}

Using the natural sequence of abelian groups:
$$
1\ra\mu_{dn}\ra\mu_{dn^2}\ra \mu_n\ra 1,
$$
the second type of singularities can be described as the double quotient 
$$
\bigslant{\left({\CC^2/\frac1{dn}(1,-1)}\right)}{\mu_n},
$$
i.e. it is a quotient of an $A_{dn-1}-$ singularity.

If $n=1$ in case $(2)$ of the Proposition \ref{descsing}, the cyclic 
singularity is a rational double point of type $A_{d-1}.$ We treat the 
$A-$type singularities and their quotients concomitantly.

We consider  the hypersurface 
$\yy=(xy-z^{dn}=Q(z^n))\subseteq\CC^3\times \CC^d,$ 
where $\displaystyle Q(z)=\sum_{k=0}^{d-1}e_kz^{k}.$ 
It is convenient to introduce the polynomial 
$$
P(z)=z^d+Q(z)=\prod_{j=1}^{l}(z-a_j)^{k_j},
$$ 
where $a_1,\dots, a_l\in \CC$ are distinct, and the positive integers 
$k_j, j=1,\dots, l,$ satisfy $\displaystyle \sum_{j=1}^{l}k_j=d.$

We denote by $(x,y,z)$ and ${\bf e}=(e_0,\dots,e_{d-1})$ the  
linear coordinates on $\CC^3$ and $\CC^d,$ respectively. 
We define the action of the group $\mu_n$ on $\yy$ by:
\begin{equation}
\label{action}
\rho(x,y,z,e_0,\dots ,e_{d-1}):=(\rho x,\rho ^{-1}y,\rho ^{m}z, e_0,\dots ,e_{d-1}),
\end{equation}
where $\rho$ is a generator of $\mu_n.$
Let $\XX=\yy{/\mu_n}$ and $\phi:\XX\ra \CC^d$ the quotient of the projection 
$\yy \ra \CC^d.$ Let $X_0$ be the fiber $\phi^{-1}(0).$ Then $(X_0,0)$ is a 
singularity of the type $\displaystyle \frac{1}{dn^2}(1,dnm-1)$  and we have:

\begin{prop}\cite{manetti, ksb}
\label{defT} 
The map
$\phi:\XX\ra \CC^d$ is a $\QQ-$Gorenstein deformation of the 
cyclic singularity $(X_0,0)$ of type 
$\displaystyle \frac{1}{dn^2}(1,dnm-1).$
Moreover, every $\QQ-$Gorenstein deformation $\XX\ra \CC$ 
of a singularity $(X_0,0)$ of type 
$\displaystyle \frac{1}{dn^2}(1,dnm-1)$ 
is isomorphic to the pullback of $\phi$ for some germ of 
holomorphic map $(\CC,0)\ra(\CC ^d,0).$ 
\end{prop}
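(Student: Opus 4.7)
My plan is to establish the two assertions in turn. For the identification of the central fiber, setting $\mathbf e = 0$ reduces $\yy$ to the hypersurface $(xy - z^{dn} = 0) \subset \CC^3$, the classical $A_{dn-1}$ singularity, which via $(x, y, z) = (u^{dn}, v^{dn}, uv)$ is biholomorphic to $\CC^2_{u,v}/\frac{1}{dn}(1,-1)$. The short exact sequence $1 \to \mu_{dn} \to \mu_{dn^2} \to \mu_n \to 1$ suggests lifting the $\mu_n$-action on $Y_0$ to a $\mu_{dn^2}$-action on $\CC^2_{u,v}$. Choosing a generator $\xi$ of $\mu_{dn^2}$ and setting $\zeta = \xi^{dn}$, a generator of $\mu_n$, I would check that $\xi \cdot (u, v) = (\xi u, \xi^{dnm-1} v)$ restricts to $\frac{1}{dn}(1, -1)$ on the subgroup $\mu_{dn}$ and induces exactly $(x, y, z) \mapsto (\zeta x, \zeta^{-1} y, \zeta^m z)$ on the invariants $(x,y,z) = (u^{dn}, v^{dn}, uv)$. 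This identifies $(X_0, 0)$ with the cyclic quotient singularity $\CC^2/\frac{1}{dn^2}(1, dnm-1)$.

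Next, I would verify the three conditions defining a $\QQ$-Gorenstein smoothing. The total space $\yy$ is smooth, since $\partial f/\partial e_0 \equiv -1$ for the defining polynomial $f = xy - z^{dn} - \sum_{k=0}^{d-1} e_k z^{kn}$; hence $\XX = \yy/\mu_n$ is normal and Cohen-Macaulay. Adjunction in $\CC^{3+d}$ gives $K_\yy$ trivial, generated on the chart $\{x \neq 0\}$ by the Poincar\'e residue form $\omega = (dx \wedge dz \wedge de_0 \wedge \dots \wedge de_{d-1})/x$. A direct computation yields $\zeta^*\omega = \zeta^m \omega$, and since $\gcd(m, n) = 1$ the invariant section $\omega^{\otimes n}$ generates $n K_\XX$, so $\XX$ is $\QQ$-Gorenstein of index $n$. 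Flatness of $\phi$ descends from the flatness of $\yy \to \CC^d$, which holds because that map is equidimensional from a Cohen-Macaulay source onto a smooth base. Finally, for generic $\mathbf e$ the polynomial $P(z) = z^d + Q(z)$ has $d$ distinct nonzero roots, so $z^{dn} + Q(z^n)$ has simple roots and the fiber $Y_\mathbf e$ is smooth in $\CC^3$; the condition $e_0 \neq 0$ further ensures that $\mu_n$ acts freely on $Y_\mathbf e$ (its fixed set $\{x = y = z = 0\}$ avoids $Y_\mathbf e$), whence $X_\mathbf e$ is smooth.

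For the versality assertion, I would appeal to the equivalence between the functor of $\QQ$-Gorenstein deformations of the quotient $X_0 = Y_0/\mu_n$ and the functor of $\mu_n$-equivariant deformations of the index-one cover $Y_0$. The classical versal deformation of the $A_{dn-1}$ singularity is $xy = z^{dn} + \sum_{k=0}^{dn-2} b_k z^k$; under the $\mu_n$-action on $z$ computed in the first step the monomial $z^k$ has weight $km \bmod n$, so invariance of the equation forces $b_k = 0$ unless $km \equiv 0 \bmod n$. Because $\gcd(m,n) = 1$, this cuts out exactly the $d$-dimensional subspace with parameters $b_{jn}$ for $j = 0, \dots, d-1$, which are identified with the $e_j$ in the definition of $\yy$. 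Hence $\phi : \XX \to \CC^d$ realizes the semi-universal $\QQ$-Gorenstein deformation of $(X_0, 0)$, and every one-parameter $\QQ$-Gorenstein deformation is the pullback along a suitable holomorphic germ $(\CC, 0) \to (\CC^d, 0)$.

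The main difficulty I anticipate is the last step: the equivalence between $\QQ$-Gorenstein deformations of $X_0$ and $\mu_n$-equivariant deformations of its index-one cover is not elementary, and the identification of the equivariant versal base with $\CC^d$ via the explicit weight computation above really requires the deformation-theoretic machinery developed by Koll\'ar--Shepherd-Barron and by Manetti. The first two parts, by contrast, reduce to direct computations on the explicit family constructed above.
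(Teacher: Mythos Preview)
The paper does not prove this proposition at all; it is stated with attribution to Manetti and Koll\'ar--Shepherd-Barron and used as input for the rest of the paper. So there is no ``paper's own proof'' to compare against.

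That said, your proposal is a correct and well-organized sketch of how the cited references establish the result. The identification of the central fiber via the exact sequence $1\to\mu_{dn}\to\mu_{dn^2}\to\mu_n\to 1$ is exactly the mechanism the paper alludes to just before the proposition, and your explicit lift $\xi\cdot(u,v)=(\xi u,\xi^{dnm-1}v)$ checks out. The verification that $\XX$ is $\QQ$-Gorenstein via the Poincar\'e residue is the right computation, and your flatness and generic-smoothness arguments are standard and correct. For the versality, your weight computation on the monomials $z^k$ correctly cuts out the $d$-dimensional $\mu_n$-invariant slice of the $A_{dn-1}$ versal base, and your honest acknowledgement that the passage from $\mu_n$-equivariant deformations of the index-one cover to $\QQ$-Gorenstein deformations of the quotient requires the Koll\'ar--Shepherd-Barron machinery is exactly the point: this is the substantive content of \cite{ksb}, and your sketch is faithful to it.
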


As in \cite{manetti}, given ${\bf e}\in \CC^d\setminus \{0\},$ 
we want the group $\mu_n,~n\geq2,$ to act freely on the fiber 
$Y_{\bf e}\subseteq\CC^3$ of the deformation $\yy\ra \CC^d.$ 
This condition is equivalent to the fact that  $\{0\}\in \CC^3$ 
lies only on the central fiber $Y_0,$ and it translates  into 
\begin{equation}
\label{man-cond}
a_j\neq 0,~~{\text{for every}}~~j=1,\dots,l.
\end{equation}
In the case $n=1,$ any fiber of the form $Y_{\bf e}=(xy=P(z))$ 
is biholomorphic to  a fiber of a deformation satisfying the above 
condition after a change of coordinates. We impose the condition 
(\ref{man-cond}) throughout this paper for any $n.$

The variety $X_{\bf e}=\phi^{-1}({\bf e})$ is the Milnor fiber of the 
$\QQ-$Gorenstein deformation if it is smooth. This translates into 
$l=d,$ and $k_j=1,~j=1,\dots, d.$

\subsection{Cyclic quotient singularities of class $T$}

In this section we construct a family of singular compactifications 
of a fiber of a $\QQ-$Gorenstein deformation of a singularity of 
the type  $\displaystyle \frac{1}{dn^2}(1,dnm-1).$ The compactifications 
are presented as hypersurfaces in appropriate weighted projective spaces.

Let 
\begin{equation}
\label{polynomial}
 P(z)=\prod_{j=1}^{l}(z-a_j)^{k_j},
\end{equation}
where $a_1,\dots, a_l\in \CC^{*}$ are distinct, and $k_j,~j=1,\dots, l,$ 
are positive integers with $\displaystyle \sum_{j=1}^lk_j=d.$ The variety  
$$
M=\left(xy=P(z^n)\right)/\mu_n\subseteq  \CC^3/\frac{1}{n}(1,-1,m)
$$
is a fiber of a $\QQ-$Gorenstein  deformation  of a 
$\displaystyle \frac{1}{dn^2}(1,dnm-1)-$singularity.

We define 
$$
{\overline M}=
\left(xy=w^{dk}P\left(\frac{z^n}{w^k}\right)=
\prod_{j=1}^{l}(z^n-a_jw^k)^{k_j}\right)\subseteq \PP(a,b,c,e),
$$ 
where we denoted by $[x:y:z:w]$ the homogeneous coordinates 
in the weighted projective space $\PP(a,b,c,e),$ and $k$ is a positive 
integer. The weights should satisfy the homogeneity conditions:
\begin{equation}
\label{hom}
a+b=dnc=dke.
\end{equation} 

We identify next sufficient conditions on the weights such that $M$ 
embeds into ${\overline M}$  as ${\overline M}\cap U_w.$

The standard affine coordinate chart 
$\displaystyle U_w=(w\neq 0)\subseteq \PP(a,b,c,e)$  
is isomorphic to $\CC_{(X,Y,Z)}^3/\frac1e(a,b,c).$
We require the action of $\mu_e$ to be equivalent to an 
action of the type $\frac1n(1,-1,m).$ This forces $e=n,$ 
and from the homogeneity condition we see that $k=c.$
Furthermore, there should exist $\rho\in \mu_n$ primitive 
root such that 
\begin{eqnarray}
\label{equiv-action}
 (1)&\rho=\xi^a;\notag \\
 (2)&\rho^{-1}=\xi^b; \notag \\
 (3)&\rho^m=\xi^c.
\end{eqnarray}

From the homogeneity condition we see that $a+b=0 \mod n,$ 
and so the conditions (1) and (2) are equivalent. Therefore, 
for any given $c\geq 0,$ the conditions $(1)-(3)$ are simultaneously 
satisfied if and only if 
\begin{equation}
\label{action-cond}
am=c \mod n.
\end{equation}

Let $u\in\{1,\dots,n-1\}$ be the unique integer such that $mu=1 \mod n.$ 
Then (\ref{action-cond}) is equivalent to
$$
a=cu \mod n.
$$

Finally, a condition we require is that the weighted projective space 
$\PP(a,b,c,n)$ is well-formed. From (\ref{hom}) and (\ref{action-cond}), 
we can see that this is equivalent to requiring that $\gcd(n,c)=1,$ 
which implies that $\gcd(a,n)=\gcd(b,n)=1.$ If $\gcd(a,c)=p\neq 1,$ 
then $\gcd(b,c)=p,$ and we write $a=pa', b=pb',$ and $c=pc',$ where 
$\gcd(a',c')=\gcd (b',c')=1.$ Notice that (\ref{hom}) yields $a'+b'=dnc',$ 
while from (\ref{action-cond}) we see that $a'=c'u \mod n.$ Moreover, 
we have an isomorphism $\PP(a,b,c,n)\simeq\PP(a',b',c',n)$ \cite{fle}. 
By replacing $(a,b,c)$ by $(a',b',c'),$ we can therefore assume 
that $\gcd(a,c)=1.$ Then the singularities of  $\PP(a,b,c,n)$ are in 
codimension at least $2.$ This simplifies the discussions regarding 
the singularities of the compactifications.

We summarize these requirements as
\begin{equation}
\label{div}
\gcd(c,n)=1~~\text{and}~~\gcd (a,c)=1.
\end{equation}

\begin{prop}
\label{compactific}
 Let $a,b$ and $c$ satisfying the conditions (\ref{hom}), (\ref{action-cond}) 
 and (\ref{div}), and let 
 $$
 {\ol M}=\left(xy=\prod_{j=1}^{l}(z^n-a_jw^c)^{k_j}\right)\subseteq \PP(a,b,c,n).
 $$ 
 We have 
\begin{itemize}
\item[ 1)] The variety $M$ embeds as a Zariski open subset in $\overline M.$ 

\item[ 2)] The singular points of $M$ are at most rational double points, 
of type $A_{k_j-1}$ for $j=1,\dots,l, k_j\geq2.$   

\item[ 3)] The singular points of $\ol M$ lying on ${\overline M}\setminus M$ 
are singularities of the types $\frac{1}{a}(c,n)$ and $\frac{1}{b}(c,n)$ at the 
points $R_1=[1:0:0:0]$ and $R_2=[0:1:0:0],$ respectively. 

\item[ 4)] The curve at infinity $C={\overline M}\setminus M$ is a smooth 
submanifold and a rational curve. Moreover, $C$ is an ample $\QQ-$Cartier 
divisor of $\ol M.$ With the induced complex structure $C$ has two singular 
points, $R_1, R_2,$ of type $\frac 1a(c), \frac 1b(c),$ respectively.

\item[ 5)] The anti-canonical divisor of ${\overline M}$ is ample, and as 
$\QQ-$Cartier divisors, we have 
\begin{equation}
\label{beta-condition}
-K_{\overline M}=\frac{c+n}{n}C.
\end{equation}
In particular, ${\overline M}$ is a log del Pezzo surface. 

\item[ 6)] The topological space $\overline M$ is simply connected 
and its second homology group has rank $d.$
\end{itemize}
\end{prop}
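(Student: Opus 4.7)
The plan is to verify each item by working in the standard affine charts of $\PP(a,b,c,n)$, applying adjunction for the $\QQ$-Cartier statements, and combining a Lefschetz-type argument with an Euler characteristic calculation for the topological claims.

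For parts (1)--(3), I first localize in the chart $U_w \simeq \CC^3/\tfrac{1}{n}(a,b,c)$. Setting $w=1$, the equation of $\ol M$ becomes $xy = P(z^n)$; condition (\ref{action}) together with $\gcd(a,n)=1$ lets me change the generator of $\mu_n$ to identify $\tfrac{1}{n}(a,b,c)$ with $\tfrac{1}{n}(1,-1,m)$, giving $\ol M\cap U_w \simeq M$ and proving (1). The $\mu_n$-action on the cover $\wt M = (xy=P(z^n)) \subset \CC^3$ fixes only the origin, which by (\ref{man-cond}) does not lie on $\wt M$, so the action is free and the singularities of $M$ coincide with those of $\wt M$. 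The Jacobian criterion applied to $f = xy - P(z^n)$ produces critical points exactly at $(0,0,z_0)$ with $z_0^n = a_j$ and $k_j \geq 2$; using $\tilde z = z - z_0$ as a local coordinate, $f$ takes the form $xy - \tilde z^{k_j}\cdot\mathrm{unit}$, yielding an $A_{k_j-1}$ singularity, hence (2). For (3), work in $U_x \simeq \CC^3_{(Y,Z,W)}/\tfrac{1}{a}(b,c,n)$ near $R_1$: the equation of $\ol M$ reduces to $Y = \prod_j(Z^n - a_j W^c)^{k_j}$, a holomorphic graph over $(Z,W)$. Eliminating $Y$ identifies $\ol M \cap U_x$ with $\CC^2_{(Z,W)}/\tfrac{1}{a}(c,n)$, and the conditions (\ref{div}) ensure this is the stated well-formed cyclic quotient. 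A symmetric computation in $U_y$ handles $R_2$.

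For (4) and (5), the divisor at infinity $C = \ol M \cap (w=0)$ sits inside $\PP(a,b,c)$ as the curve $xy = z^{nd}$. On $U_x \cap (w=0)$ the equation becomes $Y = Z^{nd}$, identifying $C \cap U_x$ with $\CC_Z/\mu_a$ acting with weight $c$; since $\gcd(a,c)=1$ this is a smooth copy of $\CC$, and likewise $C \cap U_y \simeq \CC$. These two charts glue across their common $\CC^*$ to give $C \simeq \PP^1$. As $\QQ$-Cartier divisors $C$ is the restriction to $\ol M$ of the section $w \in H^0(\PP(a,b,c,n),\OO(n))$, so $C = \OO_{\ol M}(n)$; ampleness of $\OO_{\PP(a,b,c,n)}(n)$ on the well-formed ambient gives ampleness of $C$ on $\ol M$. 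For (5), $\ol M$ is a hypersurface of degree $a+b$ and $K_{\PP(a,b,c,n)} = \OO(-(a+b+c+n))$, so adjunction yields
$$K_{\ol M} = \OO_{\ol M}(-(a+b+c+n)+(a+b)) = \OO_{\ol M}(-(c+n)) = -\tfrac{c+n}{n}C,$$
and $-K_{\ol M}$ is ample as a positive rational multiple of the ample divisor $C$, so $\ol M$ is log del Pezzo.

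Part (6) is the main obstacle, since $\ol M$ need not be quasi-smooth. The plan is first to treat the generic case $k_1=\cdots=k_l=1$: the Jacobian computation above shows that $\ol M_{\mathrm{gen}}$ is a quasi-smooth hypersurface in $\PP(a,b,c,n)$, and the Lefschetz hyperplane theorem for quasi-smooth hypersurfaces in the simply connected ambient $\PP(a,b,c,n)$ yields $\pi_1(\ol M_{\mathrm{gen}}) = 0$. For the rank, the stratification $\ol M_{\mathrm{gen}} = M_{\mathrm{gen}} \sqcup C$ and Euler characteristic additivity give $\chi(\ol M_{\mathrm{gen}}) = \chi(M_{\mathrm{gen}}) + 2$; projecting the free $\mu_n$-cover $\wt M_{\mathrm{gen}} \to M_{\mathrm{gen}}$ to the $z$-line and counting the $nd$ singular fibers (each with $\chi=1$) yields $\chi(\wt M_{\mathrm{gen}}) = nd$, hence $\chi(M_{\mathrm{gen}}) = d$ and $\chi(\ol M_{\mathrm{gen}}) = d+2$. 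Combined with simple-connectivity and rational Poincar\'e duality on the $\QQ$-factorial projective surface $\ol M_{\mathrm{gen}}$, this forces $b_2(\ol M_{\mathrm{gen}}) = d$. For the non-generic case, the minimal resolution $\wt{\ol M}$ of the rational double points contained in $M$ is diffeomorphic to $\ol M_{\mathrm{gen}}$ via simultaneous resolution in the $\QQ$-Gorenstein family obtained by varying the roots of $P$; since resolving simply connected rational singularities preserves $\pi_1$, one deduces $\pi_1(\ol M) = 0$, and the rank of $H_2(\ol M)$ is recovered from that of $H_2(\wt{\ol M})$ after accounting for the exceptional cycles produced by the resolution.
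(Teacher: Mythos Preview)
Parts (1)--(5) of your argument coincide with the paper's: both work chart by chart, read off the cyclic quotient types at $R_1,R_2$ from the graph equation for $Y$, and compute $K_{\ol M}$ by adjunction on the well-formed ambient.

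For (6) in the generic case $l=d$ your route genuinely differs from the paper's. The paper uses the decomposition $\ol M=M\cup\mathrm{Nbhd}(C)$: it imports the known topology of the Milnor fiber $M$ (namely $\pi_1(M)\simeq\ZZ/n\ZZ$ and $b_2(M)=d-1$, coming from the free $\mu_n$-quotient of the $A_{dn-1}$ Milnor fiber), identifies the overlap with the lens space $L_{dn^2}(1,dnm-1)$, and concludes by Van Kampen and Mayer--Vietoris. You instead invoke the Lefschetz hyperplane theorem for quasi-smooth weighted hypersurfaces to get $\pi_1=0$, compute $\chi$ directly from the conic fibration $\wt M\to\CC_z$, and finish with rational Poincar\'e duality. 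Both arguments are valid; yours is more self-contained once weighted Lefschetz is granted, while the paper's approach makes the role of the Milnor fiber and the link at infinity explicit. One small correction: rational Poincar\'e duality holds because $\ol M$ has only quotient singularities (it is a $V$-manifold), not merely because it is $\QQ$-factorial.

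Your treatment of the non-generic case has a real gap. The paper's proof tacitly treats only smooth $M$ (it calls $M$ ``the manifold'' and uses $b_2(M)=d-1$), so there is nothing to compare against; but your closing clause ``the rank of $H_2(\ol M)$ is recovered \dots after accounting for the exceptional cycles'' is not a proof, and if carried out it gives $b_2(\ol M)=d-\sum_j(k_j-1)=l$, not $d$. Concretely, for $n=1$, $d=2$, $P(z)=(z-a_1)^2$ and $a=b=c=1$, the surface $\ol M\subset\PP^3$ is a rank-$3$ quadric cone, which has $b_2=1=l$. So either restrict assertion (6) to the smooth fiber, as the paper in effect does, or record the correct value $b_2(\ol M)=l$ for singular fibers.
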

\begin{proof} $1)$ Notice that the condition (\ref{div}) implies that the 
weighted projective space is well-formed. The conditions (\ref{hom}) 
and (\ref{action-cond}) were imposed to ensure that $M$ embeds into 
$\ol M$ as ${\overline M}\cap U_w.$ 

\medskip

$2)$ As discussed, the chart 
$\displaystyle U_w=(w\neq 0)\subseteq\PP(a,b,c,n)$ 
is isomorphic to 
$\CC_{(X,Y,Z)}^3/\frac1n(a,b,c).$ 
In these coordinates 
$$
{\overline M}\cap U_w\simeq \left(XY=P(Z^n)\right)/\frac1n(a,b,c).
$$ 
From (\ref{div}) we find that the fixed point 
$(0,0,0)\notin {\ol M}\cap U_w$ 
is the only point of non-trivial isotropy. 

The singular points of the hypersurface 
$\displaystyle \left (XY=P(Z^n)\right)\subseteq\CC^3$ 
occur when the polynomial $P$ has multiple roots. 
We find that $S_j=[0:0:a_j^{\frac1n}:1]\in {\overline M}$ 
are singular points of type $A_{k_j-1}$ of  ${\overline M}\cap U_w,$ 
for any $j=1,\dots, l,$ such that $k_j\geq 2.$

\medskip

$3)$ We compute the singularities of $\overline M$ at infinity 
in the standard charts covering the weighted projective space 
$\PP(a,b,c,n).$

\smallskip

Let $U_x=(x\neq 0)\subseteq\PP(a,b,c,n).$ 
Then $U_x\simeq \CC_{(Y,Z,W)}^3/\frac1a(b,c,n).$ 
In these coordinates 
$$
{\overline M}\cap U_x\simeq 
\bigslant{\displaystyle{\left(Y=\prod_{j=1}^{l}(Z^n-a_jW^c)^{k_j}\right)}}{\frac1a(b,c,n)}.
$$ 
Since $a$ is relatively prime to $c$ and $n,$ the only 
point of non-trivial isotropy is the fixed point of the 
action of $\mu_a$ on 
$\CC^3,~(0,0,0)\in {\overline M}\cap U_x.$
As the hypersurface 
$\displaystyle \left(Y=\prod_{j=1}^{j}(Z^n-a_jW^c)^{k_j}\right)
\subseteq \CC^3$ 
is smooth, the only singular point of 
${\overline M}\cap U_x$ is $R_1=[1:0:0:0].$ 
Notice that the coordinates $(Z,W)$ parametrize 
${\overline M}\cap U_x,$ and so the point 
$R_1\in {\overline M}$ is a cyclic quotient singularity 
of the type $\frac{1}{a}(c,n).$

An analogous computation exhibits one more singular point of 
${\overline M}$ in the chart $U_y=(y\neq 0)\subseteq\PP(a,b,c,n).$ 
This point is $R_2=[0:1:0:0],$ a cyclic quotient singularity of the type 
$\frac{1}{b}(c,n).$ Moreover, in the chart  
$U_z=(z\neq 0)\subseteq\PP(a,b,c,n),$ as in the proof of part $1),$ 
we find no singular points on $\ol M\setminus M,$ and  
we recover the above singular points  
$S_j=[0:0:a_j^{\frac1n}:1]\in {\overline M}\cap U_z, k_j\geq 2.$

\medskip

$4)$ The curve at infinity $C={\overline M}\setminus M$ 
is the hyperplane section $(w=0),$ of weight $n.$ As a 
$\QQ$-Cartier divisor
$$
\OO_{\overline M}(C)=\OO_{\overline M}(n),
$$
where by $\OO_{\overline M}(1)$ we denote the restriction 
of the tautological sheaf  $\OO_{\PP(a,b,c,n)}(1)$ to 
$\overline M.$ 
In particular $C$ is an ample $\QQ-$Cartier divisor.

In the local charts 
$\ol M\cap U_x\simeq \CC^2_{(Z,W)}/\frac1a(c,n)$ and 
$\ol M\cap U_y\simeq \CC^2_{(Z,W)}/\frac1b(c,n),$ 
the curve at infinity $C$ is given by the $(W=0).$ 
Hence $C\cap \ol M\cap U_x\simeq \CC/ \frac1a(c),$ and 
$C\cap \ol M\cap U_y\simeq \CC/\frac1b(c),$ 
and  $C$ is smooth submanifold of $\ol M.$
 Moreover, since in the local chart  $\ol M\cap U_x$ 
 the curve $C$ corresponds to the $Z-$axis, and 
 $C$ is a one point compactification of 
 $C\cap \ol M\cap U_x,$ then it must be rational.

\medskip

$5)$ Since the weighted projective space $\PP(a,b,c,n)$ 
is well-formed and the hypersurface $\ol M$ does not 
contain its singular lines (if any), then the adjunction 
formula holds. The canonical divisor of $\ol M$ is 
$$
K_{\overline M}=(K_{\PP(a,b,c,n)}+{\overline M})|_{\overline M}
=\OO_{\overline M}(dnc-a-b-c-n).
$$
Using the homogeneity condition (\ref{hom}) we obtain
$$
K_{\overline M}=\OO_{\overline M}(-c-n)=-\frac{c+n}{n}C.
$$
In particular, we see that $\ol M$ is a log del Pezzo surface.

$6)$ To study the topological properties of $\ol M,$ we have 
to consider two cases. In the first case, when $n=1$ the manifold 
$M$ is a smooth deformation of a singularity of type $A_{d-1}.$ 
As a consequence, it is simply connected and the rank of its 
second homology is $d-1.$ The compactification is obtained by 
adding a rational curve at infinity, hence by Van Kampen's theorem, 
the topological space $\ol M$ is simply connected. 
Moreover, with respect to the decomposition 
$\displaystyle \ol M=M\cup Nbhd(C),$ the intersection 
$\displaystyle M\cap Nbhd(C)$ is homotopic to the lens space 
$L_d(1,-1)$ whose first Betti number is  
$\displaystyle b_1\left(L_d\left(1,-1\right)\right)=0.$ 
Thus, by the Mayer-Vietoris sequence the second  
Betti number is $b_2(\ol M)=d.$

In the second case, when $n\geq2,$ the manifold $M$ is obtained 
by taking the quotient of a deformation of the $A_{dn-1}$ singularity 
by a free $\mu_n-$action. Hence the fundamental group of $M$ is 
$\pi_1(M)=\ZZ/n\ZZ.$ Moreover, the fundamental group of a 
neighborhood of infinity is 
$\displaystyle \pi_1(L_{dn^2}(1,dnm-1))=\ZZ/{dn^2}\ZZ,$ 
and there is a natural surjection 
$\pi_1((L_{dn^2}(1,dnm-1))=\ZZ/{dn^2}\ZZ\to \pi_1(M)=\ZZ/n\ZZ.$ 
The Euler characteristic  of $M$ is 
$$
\chi(M)=\frac1n\chi(A_{dn-1})=\frac1n dn=d,
$$ 
hence the second Betti number of $M$ equals $d-1.$ 
Van Kampen's theorem and the Mayer-Vietoris sequence for 
$\displaystyle \ol M=M\cup Nbhd(C)$ imply that the space $\ol M$ 
is simply connected, and its second Betti number is $d.$
\end{proof}

\begin{rmk}
For a given integer $c\geq 0,$ with $\gcd(c,n)=1,$ it is easy to see that 
the conditions (\ref{equiv-action}) will be satisfied by at least one pair 
$(a,b)$ of non-negative integers. As a consequence, when taking 
different values of $c,$ the manifold $M$ can be embedded in 
infinitely many log del Pezzo surfaces.
\end{rmk}

\begin{examples} The embeddings of $M$ in log del Pezzo surfaces in 
Theorem \ref{compactification} are indexed by the set of weights $(a,b,c).$ 
We list below some interesting cases:
\begin{itemize}
\item[ 1)] an immediate choice of weights induced by the action 
$\mu_n$ are for $c=m$ and hence: 
$$
(a,b,c)=(1+knm, (d-k)nm-1,m), ~k\in\{0,\dots,d-1\}.
$$

\item[ 2)] another normalization can be obtained for $c=1,$ and hence 
$$
(a,b,c)=(u+kn, (d-k)n-u,1), ~k\in\{0,\dots,d-1\}.
$$ 
The case $d=1,~k=0$ also appears in \cite{k-bmy}.
\end{itemize}
\end{examples}
\begin{rmk}
 When $n=1$, we compactify the singularity $A_{d-1}$ 
and its deformations  
in $\PP(a,dc-a,c,1).$  A special case, when $c=1,~d$ even and 
$a=b$ appears in Saito \cite{saito}. We generalize Saito's compactification 
to infinitely many sets of weights and to arbitrary complex structures. 
The $A-$case also appears in \cite{craig}.
\end{rmk}

\subsection{A second compactification of a deformation of a 
$\frac 1{dn^2}(1,dnm-1)$ singularity, for $n>1$}
\label{secondconstr}

Let 
$$
M=\left(xy=P(z^n)\right)/\mu_n\subseteq  \CC^3/\frac{1}{n}(1,-1,m),
$$ 
where  $\displaystyle P(z)=\prod_{j=1}^{l}(z-a_j)^{k_j},$ with $a_1,\dots, a_l\in \CC^{*}$ 
distinct, $k_j,~j=1,\dots, l,$  positive integers with 
$\displaystyle \sum_{j=1}^lk_j=d$ be the generic fiber of a deformation 
as in (\ref{polynomial}). 
Let $N=\left(xy=P(z^n)\right)\subseteq  \CC^3$ and 
$\ol N=\left(xy=w^{dnc}P\left(\left({\frac {z}{w^c}}\right)^n\right)\right)$ 
be its compactification in $\PP(a,b,c,1)$ with $\gcd(a,b,c)=1.$

We construct a second compactification of $M$ as a $\mu_n-$quotient of $\ol N.$ 
On $U_w=(w\neq0)\simeq \CC^3\subseteq \PP(a, b,c,1)$ the action must be 
of the form $\frac1n(1,-1,m).$ 
We consider its extension to $\PP(a,b,c,1)$ defined as 
\begin{equation}
\label{quot}
 \rho[x:y:z:w]=[\rho x:\rho^{-1}y:\rho^mz:w] ,\rho\in\mu_n
\end{equation}

Let $\wt M$ be the quotient $\ol N/\mu_n,$ and $\pi:\ol N\to \wt M$ 
the holomorphic quotient map.

\begin{prop}
\label{compact2}
Let $\wt M$ be as above, and $C=\wt M\setminus M$ 
the divisor at infinity. Then we have:
\begin{itemize}
\item[ 1)] The variety $M$ embeds as a Zariski open subset in 
$\wt M,$ and the singular points of $M$ are at most rational double 
points of type $A_{k_j-1}$ for $j=1,\dots,l, k_j\geq2.$

\item[ 2)] Let $k=\gcd(n,am-c).$ When $k=1$  the action of $\mu_n$ 
on $\ol N$ is semi-free and has only two fixed points  $R_1=[1:0:0:0]$ 
and $R_2=[0:1:0:0].$ Otherwise, the map $\pi:\ol N\to \wt M$ is a 
branched covering of order $k,$ with branch locus $C_w=(w=0).$

\item[ 3)]  If $k=1$ the singular points of $\wt M$ lying on $C$ 
are singularities of the types $\frac{1}{an}(c-am,1)$ and 
$\frac{1}{bn}(c+bm,1)$ at the points $\pi(R_1)$ and $\pi(R_2),$ 
respectively. If $k>1,$ then the  singular points of $\wt M$ lying on $C$ 
are singularities of the types $\frac{1}{a\frac nk}(\frac{c-am}k,1)$ and 
$\frac{1}{b\frac nk}(\frac{c+bm}k,1)$ at the points $\pi(R_1)$ and $\pi(R_2),$ 
respectively.

\item[ 4)] The curve at infinity is a smooth suborbifold, rational, 
ample $\QQ-$Cartier divisor of $\wt M.$  With the induced complex 
structure, $C$ has two singular points, $\pi(R_1), \pi(R_2),$ of order 
$\frac {an}k, \frac {bn}k,$ respectively.

\item[ 5)] The anti-canonical divisor of ${\wt M}$ is ample, and as 
$\QQ-$Cartier divisors, we have 
\begin{equation}
\label{beta-condition2}
-K_{\wt M}=\frac{k+c}kC. 
\end{equation}
In particular, ${\wt M}$ is a log del Pezzo surface.

\item[ 6)] The topological space $\wt M$ is simply connected and 
its second homology group has rank $d.$
\end{itemize}
\end{prop}
\begin{proof}
$1)$ The action of $\mu_n$ on $N=\ol N \cap U_w$ is just the restriction 
of the free action $\frac1n(1,-1,m)$ on $\CC^3\setminus \{0\}.$ 
Hence $N\to M=N/\mu_n$ is the universal covering, and the rest is immediate.

$2)$ and $3)$ We need to describe the action $\mu_n$ in a neighborhood 
of the curve at infinity. On 
$
\ol N\cap U_x\subset U_x\simeq\CC^3_{(Y,Z,W)}/\frac1a(b,c,1)
$ 
the manifold is given by: 
$$
\bigslant{\left(Y={\displaystyle \prod_{j=1}^{l}}(Z^n-a_jW^c)^{k_j}\right)}{\frac1a(b,c,1)}
$$ 
and it is locally parametrized by 
$\CC^2_{(Z,W)}/\frac1a (c,1).$ 
In the chart $U_x$ the action  $\mu_n$ is of the form 
\begin{equation}
\label{act-Ux}
\rho(Y,Z,W)=(\rho^{-1-\frac ba}Y,\rho^{m-\frac ca}Z, \rho^{-\frac 1a}W)=
(\zeta^{a+b}Y,\zeta^{c-am}Z,\zeta W)   
\end{equation} 
for  some $\zeta=\rho^{-\frac1a}.$ 
By a fractional power we assume an arbitrary and consistent 
choice of $\zeta.$ This is well defined since we work on 
$
\CC^3/\frac1a(b,c,1).
$ 
The local chart of $(\ol N\cap U_x)/\mu_n$ is of the form 
$
\left(\CC^2_{(Z,W)}/\frac1a(c,1)\right)/\frac1n(m-\frac ca,-\frac1a)
$ 
which is isomorphic to 
$
\CC^2_{(Z,W)}/\frac1{an}(c-am,1).
$
Note that $\gcd(a,c-am)=1,$ as we choose $a,b,c$ such that 
$\gcd(a,b,c)=1$ and $a+b=dnc.$ 
The quotient $\CC^2_{(Z,W)}/\frac1{an}(c-am,1)$ 
is a branched cover if $\gcd(n,c-am)=k\neq1.$ 
In this case we have new complex coordinates $(Z,U)=(Z,W^k)$ 
and the new local chart is given by 
$\CC^2_{(Z,U)}/\frac1{a\frac nk}(\frac{c-am}k,1).$ 
This gives us the type of the singularity at $\pi(R_1).$

The computation in the chart 
$
U_y\simeq\CC^3(X,Z,W)/\frac1b(a,c,1)
$ is similar, and we omit it.
We obtain an induced complex chart on $\wt M$ of the form
$$
\bigslant{\left(\CC^2_{(Z,W)}/\frac1b(c,1)\right)}{ \frac1n(m+\frac cb,\frac1b)}
\simeq\CC^2_{(Z,W)}/\frac1{bn}(c+bm,1).
$$
As $a+b=dnc,$ we have that $\gcd(n,c-am)=\gcd(n,c+bm),$ 
and we regain the result regarding the branched covering. 
The type of the singularity at $\pi(R_2)$ follows from this.

$4)$ The curve at infinity is a quotient of the curve 
$C_w=(w=0)\subset\ol N\subset\PP(a, b,c,1),$ 
and is given in local coordinates by $W=0,$ or $U=0.$ 
Hence $C$ is a smooth, ample $\QQ-$divisor, 
and an orbifold rational curve with two singularities 
of order $a\frac nk,$ and $b\frac nk,$ at 
$\pi(R_1), \pi(R_2),$ respectively.

$6)$ The topological space $\wt M$ is simply connected 
as it is the compactification of $M$ by an orbifold rational curve, 
and the same argument from Proposition \ref{compactific} 
(part 6)) applies here.

$5)$ For $k$ as above, we must have that $a$ and $k$ 
are relatively prime, otherwise we obtain that the 
$\gcd(a,b,c)$ is divisible by $\gcd(a,k)\neq1.$ 
Hence, there are integers $r,s$ such that $ar+ks=1.$ 
Then the action of $\mu_k=\langle\alpha\rangle$ 
as a subgroup of $\mu_n$ on $\PP(a, b,c,1)$ is of the form
\begin{eqnarray*}
\alpha[x:y:z:w]&=&[\alpha x:\alpha^{-1}y:\alpha^mz:w] \\ \notag
&=&[(\alpha^{-r})^a\alpha x:(\alpha^{-r})^b\alpha^{-1}y:(\alpha^{-r})^c\alpha^{m}z:(\alpha^{-r})w] \\
&=&[\alpha^{-ar+1}x: \alpha^{-1-rb}y:\alpha^{m-rc}z:\alpha^{-r}w]\\
&=&[\alpha^{ks}x:\alpha^{-1+ar-dncr}y:\alpha^{m-rc}z:\alpha^{-r}w]\\
&=&[x:\alpha^{-ks-kd(\frac nk)cr}y:\alpha^{m-rc}z:\alpha^{-r}w]\\
&=&[x:y:\alpha^{m-rc}z:\alpha^{-r}w]\\
&=&[x:y:z:\alpha^{-r}w].
\end{eqnarray*}

To justify the last equality, we must show that $\alpha^{m-rc}=1.$ 
As  $\alpha$ is a $k-$root of unity and $\gcd(a,k)=1,$ it is enough 
to prove that $(\alpha^{m-rc})^a=1.$ A quick computation yields 
$(\alpha^{m-rc})^a=\alpha^{am-cra}=\alpha^{am-c+c(1-ar)}=1.$ 
 
 Hence $\PP(a,b,c,1)/\mu_k$ is isomorphic to $\PP(a,b,c,k),$  by 
 $[x:y:z:w]\mapsto[x:y:z:u],$ $u=w^k.$ 
 If we denote by $n'=\frac nk,$ then 
 $$\PP(a,b,c,1)/\mu_n\simeq\PP(a,b,c,k)/\mu_{n'},$$
 where the $\mu_{n'}$ is the induced action on $\PP(a,b,c,k).$ 
On our hypersurface the action $\mu_{n'}$ is semi-free  with two 
fixed points corresponding to $R_1$ and $R_2.$
 The map $\pi:\PP(a,b,c,1)\to \PP(a,b,c,1)/\mu_n$ can be written 
 as the composition of two projections:
 $$
 \PP(a,b,c,1) \xrightarrow{\pi_1}\PP(a,b,c,k)\xrightarrow{\pi_2}\PP(a,b,c,1)/\mu_n.
 $$
Let $\ol N'=\pi_1(\ol N),$ and $C_u=\pi_1(C_w)=(u=0)\subset\PP(a,b,c,k).$ 
Then, $-K_{\ol N'}=\frac{k+c}k C_u$ as in Proposition \ref{compactific}.
 The manifold $\wt M$ is the $\mu_{n'}-$quotient of $\ol N'$ 
 by a semi-free action, hence
 $$
 \pi_2^*(-K_{\wt M})=-K_{\ol N'}=\frac{k+c}k C_u=\frac{k+c}k \pi_2^*(C).
 $$ 
 Since $\wt M$ is simply connected, there are no torsion line bundles and 
 this implies 
 $-K_{\wt M}=\frac{k+c}k C,$ as $\QQ-$Cartier divisors.
\end{proof}

\subsection{Non-cyclic quotient singularities of class $T$}

In this section we discuss a family of singular compactifications of a 
fiber of the universal deformation of  singularities of the types 
$D_k, E_6, E_7$ and $E_8.$ These are also hypersurface singularities, 
and this allows us to exhibit the compactifications as hypersurfaces 
in appropriate weighted projective spaces. The construction appears 
in Saito's paper \cite{saito}, although his results are focused on the 
Coxeter transformation associated to the groups. We extend it to an 
arbitrary fiber of the universal deformation space of a rational double point.

Let $f$ be one of the polynomials for the $D-,E-$type singularities 
(\ref{poly}), and let $g_i\in \CC[x,y,z], i=1\dots,k$ be monomials 
yielding a basis of 
$$
\CC[x,y,z]/\langle f, \partial f/\partial x, \partial f/\partial y,\partial f/\partial z\rangle.
$$

The fiber of the universal deformation of such a rational double 
point singularity is the affine variety 
$$
M=\left(f=\sum_{i=1}^ka_ig_i\right)\subseteq \CC^3,
$$ 
where $a_i\in \CC$ are fixed  \cite{artin}. Notice that for $a_i's$ 
general enough, the surface $M$ is smooth. In general, it is known 
that $M$ has at most rational double points singularities. 
If $a_i=0,~i=1,\dots,k,$ we recover the central fiber which is a 
singularity of the type $\CC^2/G,$ where $G\subseteq SU(2).$ 
For convenience, let 
$
\displaystyle h=f-\sum_{i=1}^ka_ig_i\in \CC[x,y,z].
$

As before we would like to embed $M$ in a weighted projective space 
$\PP(a,b,c,1)$ as $\ol M\cap U_w,$ where $U_w$ is the standard chart 
$(w\neq 0)$ and $\ol M$ is a hypersurface given by a suitable 
quasi-homogenization of the polynomial $h:$ 
$$
\ol M=\left(w^Nh\left(\frac{x}{w^a},\frac{y}{w^b},\frac{z}{w^c}\right)=0\right) 
\subseteq \PP(a,b,c,1),
$$
for some positive integers $a,b,c, $ and $N.$ A brief inspection of the
 polynomials (\ref{poly}) yields weights defined uniquely up to a 
common factor. The ambiguity is eliminated by the divisibility condition 
$\gcd(a,b,c)=1,$ which ensures that the ambient weighted projective space 
is well-formed. We find
\begin{equation}
\label{weights-def-non-cyclic}
(a,b,c)=
\begin{cases}
(k-2,2,k-1), \text{for singularities of the type}~D_k\\
(3,4,6), \text{for singularities of the type}~E_6\\
(4,6,9), \text{for singularities of the type}~E_7\\
(6,10,15), \text{for singularities of the type}~E_8.
\end{cases}
\end{equation}

For the rest of this section, we  consider $h$ a quasi-homogeneous 
polynomial as above, and $(a,b,c)$ the corresponding weights. 
With these choices, the weighted degree of $\ol M$ in $\PP(a,b,c,1)$ 
is $N=a+b+c-1.$

\begin{prop}
\label{comp-def-non-cyclic}
Let 
$$
\ol M=\left(w^Nh\left(\frac{x}{w^a},\frac{y}{w^b},\frac{z}{w^c}\right)=0\right) 
\subseteq \PP(a,b,c,1),
$$
where $(a,b,c)$ as in (\ref{weights-def-non-cyclic}). Then 
\begin{itemize}
\item[ 1)] $M$  embeds as a Zariski open subset in $\ol M.$  

\item[ 2)] The singular points of $\ol M$ lying in 
${\overline M}\setminus M$ are as follows: 
\begin{itemize}
\item[ i)] {\bf Case $D_{k}$:} two singularities of type $\frac12(1,1),$
 and one of type $\frac{1}{k-2}(1,1).$

\item[ ii)]  {\bf Case $E_{6}$:} two singularities of type $\frac13(1,1),$ 
and one of the type $\frac12(1,1).$

\item[ iii)]  {\bf Case $E_{7}$:} three singularities of type 
$\frac12(1,1), ~ \frac13(1,1),$ and  $\frac14(1,1),$ respectively.
\item[ iv)] {\bf Case $E_{8}$:} three singularities of type 
$\frac12(1,1),~ \frac13(1,1),$ and $\frac15(1,1),$ respectively.

\end{itemize}

\item[ 3)] The complement $C={\overline M}\setminus M$ is a smooth, 
rational, ample $\QQ-$Cartier divisor of $\ol M.$ Moreover, with the 
induced complex structure it has three singular points of orders 
$(2,2,k-2), (2,3,3), (2,3,4), (2,3,5)$ for the $D_k, E_{6,7,8}-$cases, 
respectively.

\item[ 4)] The anti-canonical divisor of ${\overline M}$ is 
\begin{equation}
\label{beta-cond-def-non-cyclic}
-K_{\overline M}=2C.
\end{equation}
In particular, ${\overline M}$ is a log del Pezzo surface. 

\item[ 5)] The variety $\overline M$ is simply connected and 
its second homology group has rank $k+1$ for a singularity 
of type $D_k,$ and $n+1$ for a singularity of type $E_n, ~n=6,7,8.$
\end{itemize}
\end{prop}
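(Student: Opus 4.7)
The plan is to parallel the proof of Proposition \ref{compactific}, with the new feature that the ambient weighted projective space $\PP(a,b,c,1)$ now has singular coordinate lines wherever two weights share a common factor, and these contribute to the singularities at infinity. First I would verify directly from (\ref{weights-def-non-cyclic}) that $\PP(a,b,c,1)$ is well-formed in each of the four cases, so that the adjunction formula applies. Part (1) is then immediate: since the weight of $w$ equals one, $U_w \cong \CC^3_{(x,y,z)}$, and dehomogenizing $w^N h(x/w^a, y/w^b, z/w^c)$ recovers $h(x,y,z)$, identifying $M$ with $\ol M \cap U_w$.

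For part (2) I would study $C = \ol M \cap (w=0) = (f=0) \subseteq \PP(a,b,c)$ in two stages. At each vertex $P_i$, substituting into $f$ shows that only $P_1$ lies on $\ol M$, in the $D_k$ and $E_7$ cases. At such a vertex I would solve the equation for one local coordinate (for example $y = -z^2 + O(\ldots)$ in $U_x$ for $D_k$) and read off the singularity from the induced action on the two transverse coordinates, yielding $\frac{1}{k-2}(1,1)$ for $D_k$ and $\frac{1}{4}(1,1)$ for $E_7$. Next I would examine each singular coordinate line of $\PP(a,b,c,1)$, i.e.\ those carrying $\mu_p$ transverse isotropy for some $p>1$. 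Parametrizing each such line as a well-formed $\PP^1$, restricting $f$, and identifying its roots modulo the full $\CC^*$-scaling yields a definite number of intersection points, each contributing a transverse $\frac{1}{p}(1,1)$ singularity of $\ol M$. A case-by-case calculation (with a parity split for $D_k$) then produces exactly the stated lists.

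Parts (3) and (4) would follow as in Proposition \ref{compactific}. I would show that $C$ is smooth by a local analysis at each of its singular points on $\ol M$: the curve passes through them as the quotient of a smooth analytic arc by a cyclic group acting as $\frac{1}{p}(1)$, which is again an analytic arc. Rationality of $C$ can be checked directly from the explicit form of $f$; for $D_k$, for instance, the affine piece $\{y \neq 0\}$ is the smooth conic $x^2 + z^2 + 1 = 0$, completed by the single point $P_1$ at $\{y=0\}$. Since $C \sim \OO_{\ol M}(1)$ it is ample, and adjunction in the well-formed ambient (valid because $\ol M$ contains no singular line) yields
$$
K_{\ol M} = \OO_{\ol M}(N - a - b - c - 1) = \OO_{\ol M}(-2) = -2C,
$$
so $\ol M$ is log del Pezzo.

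For part (5), I would use that $M$, or its minimal resolution if $M$ is not already smooth, is diffeomorphic to the Milnor fiber of the corresponding rational double point, hence simply connected with $b_2 = k$ for $D_k$ and $b_2 = n$ for $E_n$. Van Kampen applied to $\ol M = M \cup N(C)$ gives $\pi_1(\ol M) = 1$ (since $\pi_1(M) = 1$ and $N(C)$ retracts to $C \cong S^2$), and Mayer--Vietoris, using that the link of infinity $S^3/G$ is a rational homology sphere, yields $b_2(\ol M) = b_2(M) + 1$. The main obstacle is the second stage of part (2): identifying and counting the transverse intersections of $\ol M$ with each singular coordinate line of $\PP(a,b,c,1)$ modulo the residual $\CC^*$-scaling requires care and separate treatment for each of $D_k$ (by parity of $k$), $E_6, E_7$ and $E_8$.
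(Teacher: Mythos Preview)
Your proposal is correct and follows essentially the same strategy as the paper: parts (4) and (5) are handled identically (adjunction in the well-formed ambient, and the Van Kampen/Mayer--Vietoris argument of Proposition~\ref{compactific}.6). The only difference is that for parts (1)--(3) the paper simply cites Saito \cite{saito} for the case $h=f-1$ and observes that the behavior at infinity is unchanged for an arbitrary deformation $h$ (since the lower-order terms $a_ig_i$ acquire positive powers of $w$ after quasi-homogenization and hence vanish along $w=0$), whereas you carry out the chart-by-chart analysis of the vertices and singular coordinate lines directly. Your direct computation is exactly what underlies Saito's result, so the two approaches coincide in substance; the paper's version is just shorter by deferring to the reference.
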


\begin{proof} The proofs of $\text{1),~2)},$ and $\text{3)}$  for $h=f-1$ 
can be found in \cite{saito}. One can easily check that the singularities 
at infinity remain the same for any deformation $h.$ 
The canonical class of $\ol M$ follows again from the adjunction formula:
$$
K_{\overline M}=(K_{\PP(a,b,c,1)}+{\overline M})|_{\overline M}=
\OO_{\overline M}(N-a-b-c-1)=\OO_{\overline M}(-2)=-2C.
$$
The proof of $5)$ is as in Proposition \ref{compactific}.6.
\end{proof}

Notice that in particular we obtained a compactification of a 
rational double point singularity with the properties stated in 
Proposition \ref{comp-def-non-cyclic}.

\subsection{Conclusions}

If $M$ is a fiber of a $\QQ-$Gorenstein deformation of a singularity 
of class $T,$ Propositions \ref{compactific}, \ref{compact2} and 
\ref{comp-def-non-cyclic} provide compactifications with the properties 
summarized in Theorem \ref{compactification}.

In general, $M$ might admit rational double points as singularities. 
In this case, we consider the minimal resolution $N$ of $M,$ and 
this gives us a special complex structure on the Milnor fiber of the singularity.
We have the following:

\begin{cor}
\label{res-fiber}
The minimal resolution $N$ of a fiber $M$ of a $\QQ-$Gorenstein 
deformation of a singularity of class $T$ embeds into a variety $\ol N$ 
as the complement of a smooth rational curve, which is a rational 
multiple of the anticanonical divisor. The singularities along the 
divisor at infinity are all isolated finite cyclic quotients. 
Moreover, if $M$ is associated to a  finite cyclic quotient 
singularity then there are infinitely many minimal compactifications 
with the above properties. 
\end{cor}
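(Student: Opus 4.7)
The plan is to deduce the corollary directly from the compactifications $\ol M$ produced by Propositions \ref{compactific} and \ref{comp-def-non-cyclic}, by performing a minimal resolution of the rational double points that lie inside $M \subset \ol M$ while leaving untouched the cyclic quotient singularities at infinity.

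First I would let $\bar\pi\colon \ol N \to \ol M$ denote the minimal resolution of precisely the rational double point singularities of $\ol M$ contained in $M$; equivalently, $\ol N$ is the minimal resolution of $\ol M$ away from the curve $C$ at infinity. The possible interior singularities are exactly $A_{k_j-1}$ points in the cyclic case (Proposition \ref{compactific}.2) or general rational double points in the non-cyclic case (Proposition \ref{comp-def-non-cyclic}), so by construction $N := \bar\pi^{-1}(M)$ is the minimal resolution of $M$. Because the rational double points being resolved all lie on $M$, they are disjoint from $C$, so the proper transform $\tilde C := \bar\pi^{-1}(C)$ is mapped isomorphically onto $C$. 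Consequently, $\tilde C$ is a smooth rational curve and, in a neighborhood of $\tilde C$, the pair $(\ol N,\tilde C)$ is analytically isomorphic to $(\ol M, C)$. In particular, the singularities of $\ol N$ lying on $\tilde C$ are exactly those of $\ol M$ lying on $C$, which by Propositions \ref{compactific}.3 and \ref{comp-def-non-cyclic}.2 are isolated finite cyclic quotients.

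Next I would verify the anticanonical statement. Rational double points are canonical (crepant) singularities, hence $K_{\ol N} = \bar\pi^\ast K_{\ol M}$ as $\QQ$-Cartier divisors. Combining this with the identities $K_{\ol M} = -\frac{c+n}{n} C$ from (\ref{beta-condition}) or $K_{\ol M} = -2C$ from (\ref{beta-cond-def-non-cyclic}), together with $\bar\pi^\ast C = \tilde C$ (which holds because $C$ avoids the exceptional locus), yields $K_{\ol N} = -\frac{c+n}{n}\tilde C$ in the cyclic case and $K_{\ol N} = -2\tilde C$ in the non-cyclic case. In either situation $\tilde C$ is a negative rational multiple of $K_{\ol N}$, as required.

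For the minimality clause, note that the divisor at infinity of $\ol N$ is the single irreducible smooth rational curve $\tilde C$, which passes through the cyclic quotient singularities of $\ol N$ at infinity (the points $R_1, R_2$ in the cyclic case, and the analogous points in the non-cyclic case). Hence $\tilde C$ is not a rational $(-1)$-component passing only through smooth points, and the compactification is automatically minimal. For the final assertion, in the cyclic case Proposition \ref{compactific} furnishes an infinite family of compactifications $\ol M$ indexed by the admissible triples $(a,b,c)$: varying $c$ produces genuinely different singularity types $\frac{1}{a}(c,n)$ and $\frac{1}{b}(c,n)$ at infinity, so the resulting $\ol N$ are pairwise non-isomorphic and remain minimal by the argument above. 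The main point that must be checked carefully is the crepancy of $\bar\pi$ and the fact that $C$ is disjoint from the rational double points being resolved; everything else follows formally from the two preceding propositions.
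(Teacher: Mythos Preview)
Your proposal is correct and follows essentially the same approach as the paper: extend the minimal resolution $p:N\to M$ to $p:\ol N\to\ol M$, use the crepancy of rational double points to get $K_{\ol N}=p^*K_{\ol M}=-\beta C$, and observe that the singularities along $C$ are unchanged. You are in fact more explicit than the paper about why $\bar\pi^*C=\tilde C$, about minimality, and about the infinite family (the paper relegates the latter to the remark following Proposition~\ref{compactific}); one small caveat is that your minimality argument relies on $\tilde C$ meeting singular points, which can fail when $a=1$ or $b=1$, but in any case $C$ is ample on $\ol M$ so $\tilde C^2=C^2>0$ and $\tilde C$ is never a $(-1)$-curve.
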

\begin{proof} Let $p:N\to M$ be the minimal resolution of $M.$ 
Correspondingly, let $p:\ol N\to \ol M$ denote its extension to the 
compactification. Since the singular points of $M$ are at most 
rational double points 
\begin{equation}
\label{can-res}
K_{\ol N}=p^*K_{\ol M}=-\beta C,
\end{equation} 
for some $\beta>1.$ 
The singularities of  $\ol N$ are only along the divisor at infinity as 
described in Propositions \ref{compactific} and \ref{comp-def-non-cyclic}. 
\end{proof}

Notice that if $M$ is singular, then $\ol N$ is no longer a 
log del Pezzo surface, as it contains the $(-2)-$curves 
introduced when resolving the singularities. Moreover, 
the divisor at infinity $C$ is only almost ample (see Definition 
\ref{ample-adm} below).

\begin{rmk}
Let  $M_0=\CC^2/\dfrac1{dn^2}(1,dnm-1), n\neq1,$ and 
$N$ its minimal resolution. Then $ N$ has non-trivial, even 
non-torsion as $\pi_1(N)=0$, canonical divisor, and this 
implies that $N$ does not admit Ricci-flat K\"ahler metrics. 
\end{rmk}

\section{The relation with Tian-Yau's Ricci-flat K\"ahler metrics}
\label{T-Ysec}

In this section we recall the relevant results of Tian-Yau \cite{ty2} 
and Bando-Kasue-Nakajima \cite{bkn}. Both Tian-Yau \cite{ty2} 
and Bando-Kasue-Nakajima \cite{bkn}  proved more general results, 
but we restrict the presentation to the ALE Ricci-flat case 
in complex dimension two which suffices for our purpose. 
We conclude by proving Theorem \ref{metrics}, and its corollary.

Tian and Yau construct \cite{ty2} complete Ricci-flat K\"ahler metrics 
on the complement of a divisor on compact K\"ahler orbifolds 
satisfying certain conditions.

\begin{defn}
\label{ample-adm} 
Let $D$ be a divisor in the K\"ahler orbifold ${\overline M}$ 
of complex dimension 2. Then
\begin{itemize}
\item[ (i)] $D$ is almost ample if there exists an integer 
$m > 0$ such that a basis of $H^0({\overline M},\OO(mD))$ 
gives a morphism from ${\overline M}$ into some projective space 
$\PP^N$ which is a biholomorphism in a neighborhood of $D.$

\item[ (ii)] $D$ is admissible if $\Sing({\overline M})\subseteq D,~ D$ 
is smooth in ${\overline M}\setminus\Sing({\overline M}),$ and if 
$\pi_x: {\widetilde U}_x\rightarrow U_x$ is the local uniformization at 
$x\in\Sing(\ol M)$ with ${\widetilde U}_x\subseteq \CC^2,$ then
$\pi_x^{-1}(D)$ is smooth in ${\widetilde U}_x.$
\end{itemize}
\end{defn}

For surfaces, Tian-Yau proved:

\begin{thm}[\cite{ty2}]
\label{ty-cond} Let $\overline M$ be a compact K\"ahler orbifold of complex 
dimension $2.$ Let $D$ be an almost ample, admissible divisor in 
$\overline M,$ such that 
\begin{equation}
\label{multiplicity}
-K_{\overline M} = \beta D, {\text ~for ~some}~ \beta>1.
\end{equation}
Suppose that $D$ with the induced complex structure admits a positive 
K\"ahler-Einstein metric, then $M={\overline M} \setminus D$ admits a 
complete Ricci-flat K\"ahler metric $g$ in every
K\"ahler class in $H^2_c(M,\RR).$

Moreover, if we denote by ${\mathcal R}(g)$ the curvature tensor of 
$g$ and by $r$ the distance function on M from some fixed point with 
respect to $g,$ then ${\mathcal R}(g)$ decays at the order of at least 
$r^{-3}$ with respect to the $g-$norm whenever  $D$ is biholomorphic 
to $\PP^1$ and
\begin{equation}\label{adjunction}
\OO_D(D)= \frac{2}{\beta-1}\OO_{\PP^1}(1).
\end{equation}
Furthermore the metric $g$ has euclidean volume 
growth.
\end{thm}

The Tian-Yau Theorem also requires that the divisor $D$ is neat 
(Definition 1.1(i) \cite{ty2}). Tian and Yau remark that this condition 
is probably superfluous in general. In complex dimension two this 
condition is automatically implied by the almost ampleness.

In their paper, Tian-Yau do not emphasize the cohomology 
class of the K\"ahler metric, but upon a close inspection 
of \cite{ty2}, the metrics can be constructed in an arbitrary compactly supported K\"ahler 
class. This issue is discussed in Joyce 
\cite{joyce} and Van Coevering \cite{craig}.

The metrics constructed using the Tian-Yau result are not a priori ALE, 
where by an ALE $4-$manifold we understand:
\begin{defn}
\label{ALE}
Let $G$ be a finite subgroup of $SO(4)$ acting freely on 
$\RR^4\setminus \{0\},$ and let $h_0$ be the Euclidean metric on 
$\RR^4/G.$ We say that the manifold $(M^4, g)$ is an ALE  manifold 
asymptotic to $\RR^4/G$ if there exist a compact subset $K\subseteq M$ 
and a map $\pi:M\setminus K\to \RR^4/G$ that is a diffeomorphism 
between $M\setminus K$ and the subset $\{z\in\RR^4/G ~| ~r(z)>R\}$  
for some fixed $R>0,$ such that 
\begin{equation}
\label{approx}
\nabla^k(\pi_*(g)-h_0)=O(r^{-4-k})~\text{for all}  ~k\geq 0.
\end{equation}
\end{defn}

If the metric is K\"ahler, then the group $G$ is a subgroup of $U(2),$ 
and the diffeomorphism $\pi$ identifies $M\setminus K$ with a subset 
$\CC^2/G.$ This identification is not a biholomorphism in general.

A remarkable result of Bando-Kasue-Nakajima proves that there exists 
a good asymptotic coordinate system under curvature decay conditions:

\begin{thm}[\cite{bkn}]
\label{bkn-thm}
Let $(M, g)$ be a Ricci-flat K\"ahler surface with
\begin{itemize}
\item[ 1)] $\displaystyle {\text{Vol}} B(p; r)\geq Cr^4$ for some $p\in M, C>0,$
\item[ 2)] $\displaystyle \int_M |{\mathcal R}(g)|^{2}dV_g<\infty.$
\end{itemize}
Then 
$(M, g)$ 
is ALE. 
\end{thm}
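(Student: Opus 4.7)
The goal is to construct, outside a compact $K\subset M$, a diffeomorphism $\pi: M\setminus K\to (\RR^4\setminus B_R)/G$ for some finite $G\subset SO(4)$ acting freely on $\RR^4\setminus\{0\}$, such that $\pi_*(g)-h_0$ and all its covariant derivatives decay at order at least $4$. The first task is to upgrade the global $L^2$ integrability of curvature to pointwise decay. For Ricci-flat metrics one has an $\epsilon$-regularity estimate of the form
\begin{equation*}
|\mathcal{R}|(x) \leq C\rho^{-2}\left(\rho^{-4}\int_{B(x,\rho)}|\mathcal{R}|^2\,dV\right)^{1/2},
\end{equation*}
valid whenever $B(x,\rho)$ is uniformly non-collapsed and the right-hand integral is below a universal threshold. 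The Euclidean volume growth hypothesis supplies the required non-collapsing on $B(x,r(x)/2)$ at infinity, and the global $L^2$ bound forces the integral there to tend to zero as $r(x)\to\infty$, giving $|\mathcal{R}|(x)=o(r(x)^{-2})$. Applying the same estimate to covariant derivatives then yields polynomial decay $|\nabla^k\mathcal{R}|(x)=O(r(x)^{-2-k-\epsilon})$ for some $\epsilon>0$.

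Next I would identify the geometry at infinity. With curvature decaying and volume non-collapsed, Cheeger-Gromov compactness shows that every rescaled pointed sequence $(M, r_i^{-2}g, p_i)$ with $r(p_i)\to\infty$ subconverges to a smooth Ricci-flat K\"ahler metric cone. A Ricci-flat smooth cone in real dimension $4$ has flat link (the link is an Einstein $3$-manifold with positive Einstein constant, hence a spherical space form), so the cone is $\RR^4/G$ for some finite $G\subset SO(4)$ acting freely on $S^3$. A monotonicity / unique-continuation argument shows this tangent cone is independent of the sequence $p_i$, identifying a single $G$; patching the local Cheeger-Gromov diffeomorphisms on successive annuli produces a global diffeomorphism $\pi$ of the end with $\pi_*(g)\to h_0$ in $C^\infty_{\mathrm{loc}}$.

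The final step is to upgrade this qualitative asymptotic flatness to the sharp decay order $4$. In harmonic coordinates on large annuli in $\RR^4/G$, the Ricci-flat equation for $h_{ij}:=\pi_*(g)_{ij}-\delta_{ij}$ becomes a quasilinear elliptic system, and separation of variables in spherical harmonics on $S^3/G$ produces a discrete set of admissible homogeneous decay rates. Modulo gauge, the slowest nontrivial mode is $r^{-2}$ (a Newtonian/ADM-mass term) and the next is $r^{-4}$; the point is to rule out the $r^{-2}$ coefficient, which for K\"ahler Ricci-flat metrics follows from a positive-mass / Witten-type spinor argument making essential use of the parallel K\"ahler form. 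Standard elliptic bootstrap then gives $|\nabla^k(\pi_*g-h_0)|=O(r^{-4-k})$ for all $k\geq 0$.

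The main obstacle is this last step. Promoting the polynomial curvature decay $O(r^{-2-\epsilon})$ coming from $\epsilon$-regularity to the sharp metric decay of order $4$ is not purely local: it requires ruling out the slowly-decaying harmonic mode at order $r^{-2}$, which is a global integral condition (the vanishing of a mass invariant) rather than a local regularity statement, and it is precisely here that the K\"ahler hypothesis enters in addition to Ricci-flatness.
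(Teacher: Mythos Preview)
The paper does not prove this theorem. Theorem~\ref{bkn-thm} is quoted from Bando--Kasue--Nakajima \cite{bkn} and invoked as a black box in the proof of Theorem~\ref{metrics}; the paper supplies no argument, sketch, or indication of method for it. There is therefore nothing in the paper against which to compare your proposal.

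That said, your outline is broadly in the spirit of \cite{bkn}: $\epsilon$-regularity converts the global $L^2$ curvature bound together with Euclidean volume growth into pointwise curvature decay, a tangent-cone/compactness argument identifies the end with a flat quotient $\RR^4/G$, and one then extracts the sharp decay rate by elliptic analysis in asymptotic coordinates. One caution on your final step: in \cite{bkn} the improvement to order $4$ is obtained via Moser iteration and weighted elliptic estimates for the curvature and for harmonic coordinate functions, not via a positive-mass/spinor argument ruling out an $r^{-2}$ mode. If your intent is to reconstruct the proof rather than cite it, that portion of your sketch would need to be reworked; if your intent is merely to use the result, you should simply cite \cite{bkn} as the present paper does.
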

Here $B(p; r)\subseteq M$ denotes the ball or radius $r$ in $(M,g)$ 
centered at the point $p\in M.$

\begin{proof}[Proof of Theorem \ref{metrics}]
\label{proofA}
In Corollary \ref{res-fiber}, we proved that the complex surface 
$(M, J)$ admits a compactification to a variety $(\ol M, J)$ 
with at most  three finite cyclic quotient singular points along 
the divisor at infinity $C,$ and no other singular points. 
We showed that $C$ is an almost ample, admissible, smooth 
rational curve. Moreover, the numerical conditions (\ref{multiplicity}), 
(\ref{adjunction}) of the Tian-Yau construction  are also satisfied, 
as it can be seen from (\ref{can-res}) and the adjunction formula. 
The complex structure of the divisor at infinity was studied in 
Proposition \ref{compactific}.(4), Proposition \ref{compact2}.(4), 
and Proposition \ref{comp-def-non-cyclic}.(3).

The important condition in the Tian-Yau result is that the divisor 
$D$ admits a positive K\"ahler-Einstein metric with respect to the 
complex orbifold structure induced by $\ol M.$ In the case of $S^2$ 
with three orbifold points, the orders of the singularities are $(2,2,k-2)$ 
in the $D_k$ case, $(2,3,3), (2,3,4), (2,3,5)$ in the $E_{6,7,8}$ cases, 
which are exactly the orbifolds which are global quotients of $S^2$ by 
the dihedral and polyhedral groups. In particular, they all admit positive 
Einstein metrics. The metrics are K\"ahler-Einstein as we are in 
oriented surfaces and real dimension $2.$

In the case of $S^2$ with two orbifold points a well known result of 
Troyanov  \cite{tr} tells us that the Einstein metric  exists if and only if 
the two orbifold points have equal order, and in this case we have the 
quotient of the sphere with the canonical metric. The order of the 
singularities in the compactifications obtained in Proposition 
\ref{compactific}.(4) are $a$ and $b.$ This implies $a=b.$ As 
$a+b=dnc$ and $\gcd(a,b,c)=1$ this implies that $c=1$ when 
$dn$ is even, and $c=2$ when $dn$ is odd. Moreover, when the 
conditions (\ref{equiv-action}) are satisfied, we must have 
$\rho=\pm 1,$ so $n=1$ or $2.$ In particular, the first construction 
yields compactifications for singularities of type: $A_{d-1}$ in 
$\PP(d,d,2,1)$ when $d$ is odd, or in $\PP(\frac d2, \frac d2,1,1)$ 
when $d$ is even, and for singularities of type $\frac1{4d}(1,2d-1),$ 
when $d$ odd, in $\PP(d,d,1,2).$

For all the other cyclic singularities we use the second 
construction from Section \ref{secondconstr}. In general, 
we obtain that the deformation of a singularity of type 
$\frac1{dn^2}(1,dnm-1)$ compactifies as in Proposition \ref{compact2} 
in $\PP(dn,dn,2,1)/\mu_n$ when $dn$ is odd, or in 
$\PP(\frac{dn}2,\frac{dn}2,1,1)/\mu_n$ when $dn$ is even. 
Notice that for $n=2$ the two constructions give in fact the same 
compactification.

Therefore, in all possible cases, there exists a unique construction 
among our compactifications which satisfies the hypothesis of the 
Tian-Yau Theorem. Hence, the complement of the divisor at infinity, 
$\ol M\setminus C,$ admits a complete Ricci-flat K\"ahler metric 
in any K\"ahler class. This metric must be ALE by Theorem \ref{bkn-thm}. 
Hence, by the uniqueness part of Theorem \ref{classif}, the Tian-Yau 
method rediscovers the unique metric $g$ in the K\"ahler class  $[\omega_g].$
\end{proof}

\section{Smooth compactifications}

In this section we discuss the smoothness of our compactifications, 
and prove Proposition \ref{smooth-comp}. 
\begin{lem} 
\label{smoothcase}
Let $(\ol M, [D])$ be a pair consisting of smooth complex surface 
equipped with a smooth ample divisor $[D]$ homeomorphic to $S^2.$ 
If there exists $\beta> 1$ such that $-K_{\ol M}=\beta [D]$ then $(\ol M, D)$ 
is one of the following:
\begin{itemize} 
\item[ 1)] $(\PP^1\times\PP^1, [Diagonal]),$ 

\item[ 2)] $(\PP^2, [Line]),$  

\item[ 3)] $(\PP^2, [Conic]).$ 
\end{itemize}
\end{lem}

\begin{proof}
The condition $-K_{\ol M}=\beta [D]$ with $\beta>1$ and $[D]$ ample 
implies that $-K_{\ol M}$ is ample, that is $\ol M$ is a del Pezzo surface. 
In particular, from the classification of del Pezzo surfaces we know that 
$K_{\ol M}^2\in\{1,2,\dots,9\}.$ On the other hand, from the adjunction 
formula for the smooth divisor $[D]$ and using again that  
$-K_{\ol M}=\beta [D],$ we infer that
$$
K_{\ol M}^2=\frac{2\beta^2}{\beta-1}.
$$
This is possible only when $K_{\ol M}^2=8,$ in which case 
$\beta=2,$ or $K_{\ol M}^2=9,$ in which case either $\beta=3$ or 
$\beta=3/2.$ Using again the classification of del Pezzo surfaces, 
in the first case $(\ol M,[D])$ must be $(\PP^1\times\PP^1, [Diagonal]),$ 
while in the second case $\ol M$ is $\bcp^2$ and $[D]$ is either a line 
or a smooth conic.
\end{proof}

\begin{prop}
\label{smooth-comp}
The only cases in our constructions when the compactification 
$\ol M$ is a smooth surface is when $M$ is $\CC^2,$ an $A_1-$type 
manifold, or the $\ZZ_2$ quotient of an $A_1,$ and in these cases 
the compactifications are $ (\PP^2,[Line])$ $(\PP^1\times\PP^1, [Diagonal]),$ 
or $(\PP^2, [Conic])$ respectively.
\end{prop}
\begin{proof}
The singular locus of the compactifications discussed is studied in 
Propositions \ref{compactific}, \ref{compact2} and \ref{comp-def-non-cyclic}.

Proposition \ref{compactific} is used to compactify  deformations of 
$A-$type singularities, in which case $n=1.$  They are smooth if and only if 
$a=b=1.$ When $n=1,$ then the condition (\ref{hom}) forces either 
$d=1$ and $c=2$ or $d=2$ and $c=1.$ In the case when $d=2$ and $c=1$ 
we deform a $A_1-$singularity, and the compactification we found is 
$$
\ol M=\left(xy=w^2P\left(\frac{z}{w}\right)\right)\subset \PP(1,1,1,1)
$$ 
In this case, $\ol M$ is a smooth quadric, and hence isomorphic to 
$\PP^1\times \PP^1,$ and the divisor at infinity is the diagonal.

In the case when $d=1$ and $c=2,$ we have in fact $M\simeq\CC^2$. 
Our compactification  in this case is
$$
\ol M=\left(xy=w^2P\left(\frac{z}{w^2}\right)\right)\subset \PP(1,1,2,1)
$$ 
Projecting  $\ol M$ from the point $[0:0:1:0]\notin \ol M$ onto 
$\PP_{[x:y:w]}(1,1,1)$ is an isomorphism under which the divisor 
at infinity is the line $(w=0).$

The compactifications described in Proposition \ref{compact2} are smooth 
if and only if $a\frac{n}{k}=b\frac{n}{k}=1.$ This is possible only when 
$a=b=\frac nk=1.$ Moreover, only the case $n>1$ is relevant here, 
otherwise the  compactifications in Propositions \ref{compact2} and 
\ref{compactific} coincide.

Using again the homogeneity condition (\ref{hom}), we see that $dnc=2.$ 
Therefore we obtain $n=2,$ which yields $k=2,$ and  $d=c=1.$ In this case, 
we compactify the deformation of a $\frac14(1,1)-$singularity, and the 
compactification is 
$$
\ol M=\ol N/\mu_2\subset \PP(1,1,1,1)/\mu_2,
$$
where $\ol N=(xy=w^2P(\frac{z^2}{w^2})).$ In this case $\ol N$ is a smooth 
quadric, and hence isomorphic to $\PP^1\times \PP^1,$ and the curve at 
infinity is the diagonal.The quotient map $\ol N\to \ol N/\mu_2\simeq\ol M$ 
is ramified along the diagonal. Therefore $\ol M$ is the projective plane $\PP^2$ 
and the curve at infinity is a smooth conic.

The compactifications described in Proposition \ref{comp-def-non-cyclic} 
are never smooth.
\end{proof}

\subsection*{Acknowledgements} While writing this paper, the second author 
was partially supported by the NSF grant DMS-1007114 .

\bibliographystyle{alpha} 

\end{document}